\newtheorem{theorem}{Theorem}[section]
\newtheorem{lemma}[theorem]{Lemma}
\newtheorem{corollary}[theorem]{Corollary}
\begin{document}

\title{Cayley properties of merged Johnson graphs}

\author[G.A.Jones]{Gareth A.\ Jones}
\address{Gareth A.\ Jones, Mathematical Sciences, 
  \newline ${}$\hskip 20pt 
University of Southampton, Southampton, United Kingdom
\newline
 ${}$\hskip 100pt and
\newline
${}$\hskip20pt
Institute of Mathematics and Computer Science,
${}$\hskip20pt
\newline
${}$\hskip20pt
Matej Bel University, Bansk\'a Bystrica, Slovakia
}
\email{G.A.Jones@maths.soton.ac.uk}

\author[R.\ Jajcay]{Robert Jajcay}
\address{Robert Jajcay, 
  \newline ${}$\hskip 20pt 
Comenius University, Bratislava, Slovakia \\
\newline
 ${}$\hskip 100pt and
\newline  ${}$\hskip 20pt 
University of Primorska, Koper, Slovenia}
 \email{robert.jajcay@fmph.uniba.sk}

\begin{abstract} 
Extending earlier results of Godsil and of Dobson and Malni\v c on Johnson graphs, we characterise those merged Johnson graphs $J=J(n,k)_I$ which are Cayley graphs, that is, which are connected and have a group of automorphisms acting regularly on the vertices. We also characterise the merged Johnson graphs which are not Cayley graphs but which have a transitive group of automorphisms with vertex-stabilisers of order $2$. Even though these merged Johnson graphs are all vertex-transitive, we show that only relatively few of them are Cayley graphs or have a transitive group of automorphisms with vertex-stabilisers of order $2$.

\end{abstract}

\date{} 

\maketitle

\noindent{\bf MSC classifications:} 
05E18 (primary), 

20B05, 
20B20, 
20B25 
(secondary).
\medskip

\noindent{\bf Key words:} Johnson graph, Cayley graph, $k$-homogeneous group.


\section{Introduction}\label{Intro}  

There is a folklore conjecture that, up to a given sufficiently large order, most connected vertex-transitive graphs are Cayley graphs, or equivalently, by a result of Sabidussi~\cite{Sab64}, that most such graphs have a group of automorphisms acting regularly on the vertices. Censuses of graphs of small order, such as that by McKay and Praeger in~\cite{MP94}, or that of cubic graphs by Poto\v cnik, Spiga and Verret in~\cite{PSV}, provide evidence to support this conjecture. However, if one restricts attention to certain particular families of vertex-transitive graphs, the picture can be very different. 

The smallest vertex-transitive graph which is not a Cayley graph is the Petersen graph. This is the odd graph $O_3$, or equivalently the Kneser graph $K(5,2)$. More generally, the {\em Kneser graph\/} $K(n,k)$ has the set $N\choose k$ of $k$-element subsets of an $n$-element set $N$ as its vertices, adjacent if and only if they are disjoint; the action of the symmetric group $S_n$ on $N$ shows that Kneser graphs are vertex-transitive. Taking $n=2k+1$ gives the {\em odd graph\/} $O_{k+1}:=K(2k+1,k)$. Answering a question of Biggs~\cite{Big} about whether any odd graphs are Cayley graphs, Godsil showed in~\cite{God} that $K(n,k)$ is not a Cayley graph unless either
$k=2$, $n$ is a prime power and $n\equiv 3$ mod~$(4)$, or $k=3$ and $n=8$ or $32$.

Godsil's proof uses the fact that $S_n$ is the full automorphism group of $K(n,k)$, so a group of automorphisms acts vertex-transitively on this graph if and only if it is a $k$-homogeneous permutation group of degree $n$ (one which is transitive on $k$-element subsets), and it acts regularly if and only if it is sharply $k$-homogeneous (regular on $k$-element subsets). Results of Kantor~\cite{Kan72} and of Livingstone and Wagner~\cite{LW}, summarised here in Theorem~\ref{k-homog}, provide a complete description of the latter groups (see cases (1), (2) and (3) of Theorem~\ref{MergedJohnsonCayley}), so the result follows.

The {\em Johnson graph\/} $J(n,k)$ has the same vertex set $N\choose k$ as $K(n,k)$, but in this case two $k$-element subsets are adjacent if and only if their intersection has $k-1$ elements. (We will take $k\ge 2$ to avoid considering complete graphs $J(n,1)=K_n$, while the isomorphism $J(n,k)\cong J(n,n-k)$ given by taking complements allows us to assume that $k\le n/2$.) As observed recently by Dobson and Malni\v c~\cite{DoMa}, Godsil's result extends to the Johnson graphs $J(n,k)$ with $2\le k\le (n-1)/2$, since they also have automorphism groups isomorphic to $S_n$. The same authors, as a corollary to a study of groups acting on partitions, have obtained similar results for the Johnson graphs $J(n, n/2)$ and for the folded Johnson graphs, quotients of $J(n,n/2)$ by complementation. 

The {\em distance $i$ Johnson graphs\/} $J(n,k)_i$, for $i=1, \ldots, k$, have vertex set $N\choose k$, but with pairs of vertices adjacent if and only if they are at distance $i$ in $J(n,k)$, that is, if and only if the corresponding subsets of $N$ have $k-i$ elements in common. Thus $J(n,k)=J(n,k)_1$ and $K(n,k)=J(n,k)_k$, for example. If $I$ is any non-empty subset of $\{1,\ldots, k\}$ one can form the {\em merged Johnson graph}
\[J(n,k)_I=\bigcup_{i\in I}J(n,k)_i,\]
in which two $k$-element subsets of $N$ are adjacent if and only if they are at distance $i$ in $J(n,k)$ for some $i\in I$.

The automorphism groups of the merged Johnson 
graphs $J=J(n,k)_I$ have been described in~\cite[Theorem~2]{Jon}, repeated here as Theorem~\ref{AutJ}. For most values of the parameters, ${\rm Aut}\,J=S_n$. However, in some cases, 
${\rm Aut}\,J$ is significantly larger than $S_n$ and contains $S_n$ as a proper subgroup. 
This makes it impossible to extend the results of Godsil or
of Dobson and Malni\v c mentioned above along the same lines to such graphs. In the following theorem we use different techniques to extend their results 
to all the merged Johnson graphs $J(n,k)_I$. Perhaps surprisingly, none of 
the cases where ${\rm Aut}\,J$ contains $S_n$ as a proper subgroup yields new Cayley graphs, apart from some rather trivial examples in cases (4) and (5), where the automorphism group is particularly large.

\begin{theorem}\label{MergedJohnsonCayley}
Let $2\le k\le n/2$ and let $I$ be a non-empty subset of $\{1,\ldots, k\}$. Then the merged Johnson graph $J=J(n,k)_I$ has a regular group $G$ of automorphisms if and only if one of the following holds:

\begin{enumerate}

\item $n$ is a prime power, $n\equiv 3$ {\rm mod}~$(4)$ 
and $k=2$, with any $I$, and $G\cong AHL_1(F)$ acting on some Dickson near-field $N=F$ of order $n$;

\item $n=8$ and $k=3$, with any $I$, and $G\cong AGL_1(8)$ acting on the finite field $N={\mathbb F}_8$;

\item $n=32$ and $k=3$, with any $I$, and $G\cong A\Gamma L_1(32)$ acting on the finite field $N={\mathbb F}_{32}$;

\item $I=\{1,\ldots, k\}$, with any $n$ and $k$, and $G$ any group of order $n\choose k$, acting on itself by right multiplication;

\item $k=n/2$ and $I=\{k\}$ or $\{1,\ldots, k-1\}$, with $G$ any group of order $n\choose k$, acting on itself by right multiplication.

\end{enumerate}
\end{theorem}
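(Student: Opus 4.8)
The plan is to combine Sabidussi's theorem with the description of $\mathrm{Aut}\,J$ in Theorem~\ref{AutJ} and the classification of $k$-homogeneous groups in Theorem~\ref{k-homog}. Since a connected graph is a Cayley graph precisely when it has a regular group of automorphisms, it suffices to decide when $\mathrm{Aut}\,J$ contains a subgroup $G$ acting regularly on the vertex set $\binom{N}{k}$; any such $G$ has order $\binom{n}{k}$ and acts freely and transitively. I would split the argument according to Theorem~\ref{AutJ}, which tells us that $\mathrm{Aut}\,J$ properly contains $S_n$ only in two circumstances: when $I=\{1,\dots,k\}$ (so that $J$ is complete), and when $n=2k$ (so that complementation $A\mapsto N\setminus A$ is an extra, central automorphism).

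First, suppose $\mathrm{Aut}\,J=S_n$. Then a subgroup $G\le S_n$ is regular on $\binom{N}{k}$ if and only if it is transitive on $k$-subsets with trivial set-stabiliser, that is, sharply $k$-homogeneous of degree $n$. This is exactly Godsil's reduction, now applied to merged Johnson graphs, and it is independent of $I$. Feeding the range $2\le k\le n/2$ into Theorem~\ref{k-homog} leaves precisely the groups $AHL_1(F)$ with $k=2$ and $n\equiv 3\pmod 4$ a prime power, together with the two sporadic examples $AGL_1(8)$ and $A\Gamma L_1(32)$ with $k=3$; these yield cases (1), (2) and (3). I would also note that none of these parameter values satisfies $n=2k$, so that they genuinely belong to this regime.

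Next I would treat the two exceptional regimes. If $I=\{1,\dots,k\}$ then $J$ is the complete graph on $\binom{n}{k}$ vertices, which is the Cayley graph $\mathrm{Cay}(G,G\setminus\{e\})$ of every group $G$ of that order, giving case (4). If $n=2k$ and $I=\{k\}$ or $\{1,\dots,k-1\}$, then $J$ is a perfect matching or its complement (a cocktail-party graph); since $\binom{2k}{k}$ is even, every group $G$ of that order contains an involution $s$, and $\mathrm{Cay}(G,\{s\})$, respectively $\mathrm{Cay}(G,G\setminus\{e,s\})$, realises $J$, giving case (5). It remains to show that for $n=2k$ with any other $I$, where $\mathrm{Aut}\,J\cong S_n\times C_2$, the graph is not a Cayley graph. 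Here I would take a putative regular subgroup $G\le S_n\times C_2$ of order $\binom{2k}{k}$ and set $K=G\cap S_n$. If $K=G$ then $G$ is sharply $k$-homogeneous of degree $2k$, which Theorem~\ref{k-homog} excludes for $k\ge 2$; otherwise $K$ has index $2$ in $G$, is semiregular on $\binom{N}{k}$ with two orbits, and $G\setminus K$ consists of products of a permutation with complementation.

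The main obstacle is precisely this last case. Ruling it out amounts to showing that $S_{2k}$ has no subgroup $K$ of order $\binom{2k-1}{k-1}$ that is fixed-point-free on $k$-subsets and extends, by a complementation-type element, to a regular group; since $K$ commutes with complementation, its two orbits are either each closed under complementation or interchanged by it, and I would derive a contradiction from this dichotomy together with the cycle structure forced on the elements of so large a fixed-point-free $K$. An appealing alternative, exploiting the hypotheses already in play, is to pass to the folded Johnson graph $J(2k,k)/\langle c\rangle$ and invoke the Dobson--Malni\v c analysis cited in the introduction: a regular group containing complementation descends to a regular group of the folded graph, while the case $c\notin G$ is handled by the orbit dichotomy above. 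Either route completes the converse, and assembling the five cases yields Theorem~\ref{MergedJohnsonCayley}.
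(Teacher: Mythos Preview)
Your reading of Theorem~\ref{AutJ} is incomplete, and this creates real gaps. You assert that $\mathrm{Aut}\,J$ properly contains $S_n$ only when $I=\{1,\dots,k\}$ or $n=2k$, but Theorem~\ref{AutJ} lists two further regimes that you have entirely overlooked: case~(2), the exceptional pair $J(12,4)_{\{1,3\}}$ and $J(12,4)_{\{2,4\}}$ with $\mathrm{Aut}\,J=GO^{-}_{10}(2)$; and case~(4), where $k=(n-1)/2$ and $I=k+1-I$, with $\mathrm{Aut}\,J=S_{n+1}$ acting via equipartitions of an $(n+1)$-set. Neither of these is covered by your $S_n$ analysis, and each requires its own argument. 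The paper handles the first by a Sylow-theoretic calculation showing $GO^{-}_{10}(2)$ has no subgroup of order $495$, and the second by translating a regular action on $\binom{N}{k}$ into an action on equipartitions of $N^*$ and invoking a lemma on subgroups of $S_{n+1}$ with two regular orbits on $(k+1)$-sets.

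Even within the $n=2k$ case your treatment is incomplete. You write that for the remaining $I$ one has $\mathrm{Aut}\,J\cong S_n\times C_2$, but Theorem~\ref{AutJ} splits this further: when $I'=I''$ (with $I'=I\setminus\{k\}$ and $I''=k-I'$) the automorphism group is the much larger $S_2^e\rtimes S_n$, and a putative regular $G$ need not sit inside $S_n\times C_2$ at all. Moreover, for the subcase you do address, neither of your two proposed endings is actually carried out: the ``cycle structure'' contradiction is not stated, and the appeal to Dobson--Malni\v c on folded Johnson graphs is a pointer, not an argument. The paper instead proves a clean lemma (that a subgroup $H\le S_{2k}$ with two regular orbits on $k$-sets forces $n=4$), obtained by showing such an $H$ must fix a point and act set-transitively on the remaining $n-1$ points, and then invoking the Beaumont--Peterson classification of set-transitive groups. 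That lemma is the missing engine for both the $n=2k$ and the $k=(n-1)/2$ cases.
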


This result characterises those graphs $J(n,k)_I$ which are Cayley graphs, if we exclude those with $k=n/2$ and $I=\{k\}$, which are not connected.

Here, if $F$ is a Dickson near-field then $A\Gamma L_1(F)$ is the group of transformations $t\mapsto t^{\gamma}a+b$ of $F$, where $a, b\in F$, $a\ne 0$ and $\gamma$ is an automorphism of $F$, while $AGL_1(F)$ is the subgroup for which $\gamma$ is the identity. If $n$ is odd then we denote by $AHL_1(F)$ the subgroup of index $2$ in $AGL_1(F)$ for which $a$ is a non-zero square (here `$H$' stands for `half'). Each field is a near-field, and when $F$ is the finite field ${\mathbb F}_n$ of order $n$ we write simply $A\Gamma L_1(n)$, etc. (See~\cite[\S1.12]{Cam99}, \cite[\S7.6]{DM}, \cite[\S20.7]{Hal} or the Appendix for near-fields and their associated groups.)

The actions in Theorem~\ref{MergedJohnsonCayley}(1) for which $F$ is a field can be explained and constructed using maps on surfaces. The graph $J(n,2)$ is isomorphic to the line graph $L(K_n)$ of the complete graph $K_n$. A construction due to Biggs~\cite{Big71} shows that for each prime power $n\equiv 3$ mod~$(4)$, $AHL_1(n)$ acts regularly on the edges of an orientably regular map with $1$-skeleton $K_n$, inducing a regular group of automorphisms of $J(n,2)$. There are similar interpretations, in terms of maps on surfaces, for the actions in parts~(2) and (3) of Theorem~\ref{MergedJohnsonCayley}. 

Theorem~\ref{MergedJohnsonCayley} is proved in Section~\ref{Cayleyprops}, using the description of ${\rm Aut}\,J$ in Theorem~\ref{AutJ}. The cases where ${\rm Aut}\,J=S_n$ are dealt with as in Godsil's theorem, by using known results on $k$-homogeneous groups. This method also shows that if $5<k<(n-1)/2$, and in addition in most cases where $k=4, 5$ or $(n-1)/2$, the groups $A_n$ and $S_n$ are the only vertex-transitive groups of automorphisms of $J$, again extending a result of Godsil~\cite{God} for the odd graphs $O_{k+1}=J(2k+1,k)_k$; see Corollary~\ref{onlyAnSn} for details. However, those cases where ${\rm Aut}\,J > S_n$ require other methods, based in some cases, for example, on considering $G\cap S_n$ as a permutation group on $N$. 

Theorem~\ref{MergedJohnsonCayley} was motivated by a paper of Gauyacq~\cite{gau}, in which the author studies quasi-Cayley graphs. These are graphs with a regular family $\mathcal F$ of automorphisms{, meaning that for each ordered pair $(v,w)$ of vertices there is a unique automorphism in $\mathcal F$ taking $v$ to $w$. This generalises the concept of a Cayley graph, where $\mathcal F$ is a {\sl group\/} of automorphisms. In~\cite{JJ} we generalise this further to the concept of an $r$-regular family of automorphisms, where there are exactly $r$ elements of $\mathcal F$ taking $v$ to $w$. The following theorem characterises those merged Johnson graphs which have a $2$-regular group of automorphisms (by which we mean one which is transitive, with vertex-stabilisers of order $2$).  For instance, this class of graphs includes the well-known example of the Petersen graph $O_3=J(5,2)_2$ in Theorem~\ref{MergedJohnson2reg}(1), where we take $F={\mathbb F}_5$.

\begin{theorem}\label{MergedJohnson2reg}
Let $2\le k\le n/2$ and let $I$ be a non-empty subset of $\{1,\ldots, k\}$. Then $G$ is a $2$-regular group of automorphisms of the graph $J=J(n,k)_I$ if and only if one of the following holds:
\begin{enumerate}

\item $n$ is a prime power, $k=2$, with any $I$, and $G\cong AGL_1(F)$ for some near-field $N=F$ of order $n$;

\item $n=6$ and $k=3$, where $G\cong AGL_1(5)\times S_2$, with any $I$, and $AGL_1(5)$ acting naturally on the projective line $N={\mathbb P}^1({\mathbb F}_5)$, fixing $\infty$, and $S_2$ generated by complementation in $N$;

\item $n=10$, $k=5$  and $I=\{1,4\}$, $\{2,3\}$, $\{1,4,5\}$ or $\{2,3,5\}$, where $G\cong PSL_2(8)$ (this action of $G$ is explained in Section~\ref{2-reggps});

\item $n$ is even, $k=n/2$ and $I=\{k\}$ or $\{1,\ldots, k-1\}$, where $G$ is any group of order $2{n\choose k}$ with a non-normal subgroup $H$ of order $2$, acting on the cosets of $H$;

\item $I=\{1,\ldots, k\}$, where $G$ is any group of order $2{n\choose k}$ with a non-normal subgroup $H$ of order $2$, acting on the cosets of $H$.

\end{enumerate}

\end{theorem}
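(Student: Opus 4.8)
The plan is to establish the ``if and only if'' by reducing, via Theorem~\ref{AutJ}, to a short list of cases according to the structure of $\Aut J$, and then to treat each case with the appropriate tool; the ``if'' direction in cases~(1), (2), (4) and~(5) is a direct verification, while case~(3) relies on the explicit construction in Section~\ref{2-reggps}. Throughout, a $2$-regular group $G$ satisfies $|G|=2\binom{n}{k}$ and has $k$-set stabiliser of order~$2$. When $\Aut J=S_n$, a transitive group of automorphisms is exactly a $k$-homogeneous subgroup $G\le S_n$, and $G$ is $2$-regular precisely when the setwise stabiliser $G\cap(\Sym(S)\times\Sym(N\setminus S))$ of a $k$-set $S$ has order~$2$. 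I would therefore use the classification of $k$-homogeneous groups from Theorem~\ref{k-homog}, together with the results of Kantor and of Livingstone--Wagner, exactly as in the proof of Theorem~\ref{MergedJohnsonCayley}.

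For the generic case $\Aut J=S_n$ the claim is that only $k=2$ survives. For $k\ge3$ one computes, for each family of $k$-homogeneous groups, the order of the $k$-set stabiliser and checks that it is never~$2$: for $A_n$, $S_n$ and the Mathieu groups it is far too large, while the one-dimensional affine and projective examples give stabilisers of order $6$ (for $PGL_2(q)$), $3$ (for $PSL_2(q)$ with $q\equiv3$ mod~$(4)$, and for $A\Gamma L_1(8)$) or $1$ (for the sharply $k$-homogeneous groups $AGL_1(8)$ and $A\Gamma L_1(32)$, already accounted for in Theorem~\ref{MergedJohnsonCayley}). For $k=2$ the stabiliser of order~$2$ forces $|G|=n(n-1)$; if $G$ is $2$-transitive it is then sharply $2$-transitive, hence $AGL_1(F)$ for a near-field $F$ of order $n$ by Zassenhaus, which is case~(1). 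The remaining possibility, that $G$ is $2$-homogeneous but not $2$-transitive, I would eliminate by a parity argument: such $G$ lies in $A\Gamma L_1(n)$ with $n=p^f\equiv3$ mod~$(4)$, forcing $f$ odd; its order-$2$ stabiliser cannot interchange the two points of $S$ (that would yield $2$-transitivity), so it must fix both, i.e.\ be a field automorphism of order~$2$---which does not exist when $f$ is odd.

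The cases with $\Aut J>S_n$ are supplied by Theorem~\ref{AutJ}. When $I=\{1,\dots,k\}$ the graph is complete and $\Aut J=S_{\binom{n}{k}}$, so a $2$-regular group is any transitive permutation group of degree $\binom{n}{k}$ and order $2\binom{n}{k}$; faithfulness is equivalent to the point-stabiliser $H$ of order~$2$ being core-free, that is, non-normal, giving case~(5). When $n=2k$ and $I=\{k\}$ or $\{1,\dots,k-1\}$ the graph is a perfect matching or its complement, with $\Aut J=S_2\wreath S_m$ and $m=\binom{2k}{k}/2$; here I would add the observation that any group $G$ of order $4m$ with a non-normal subgroup $H$ of order~$2$ contains a subgroup $K\supseteq H$ of order~$4$ (since $4\mid|G|$), whose cosets provide the $G$-invariant pairing needed to embed $G$ in $S_2\wreath S_m$ and realise the matching, giving case~(4). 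For the remaining values with $n=2k$, where $\Aut J=S_n\times\langle c\rangle$ with $c$ complementation, I would put $G_0=G\cap S_n$ and split on $[G:G_0]$. The subcase $G=G_0\le S_n$ is excluded for $k=n/2\ge3$ by a divisibility obstruction, Livingstone--Wagner forcing $(k-1)$-transitivity so that $(2k)(2k-1)\cdots(k+2)$ divides $2\binom{2k}{k}$, which fails; and when $[G:G_0]=2$, either $G_0$ is regular on the vertices, hence sharply $k$-homogeneous and therefore impossible for $k\ge3$ on grounds of order, or $G_0$ has two orbits on the $k$-sets interchanged by complementation, each with $k$-set stabiliser of order~$2$, which for $(n,k)=(6,3)$ forces $G_0\cong AGL_1(5)$ and gives case~(2).

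Finally, the sporadic family in case~(3) occurs for $(n,k)=(10,5)$ and the four listed sets $I$, where Theorem~\ref{AutJ} provides an automorphism group strictly larger than $S_{10}\times\langle c\rangle$. Since $PSL_2(8)$ has no transitive action on fewer than $9$ points, any copy of it in $S_{10}$ has point-orbits of sizes $9$ and $1$ and so fixes a point, making it intransitive on the $\binom{10}{5}$ vertices; its $2$-regular action must therefore arise from these exceptional automorphisms, which I would construct explicitly in Section~\ref{2-reggps} and verify to be $2$-regular for exactly these four values of $I$. I expect the main obstacle to be precisely these non-generic $n=2k$ cases: ruling out spurious $2$-regular groups inside $S_n\times\langle c\rangle$ requires a concrete classification of the subgroups $G_0=G\cap S_n$ of order $\binom{2k}{k}$ having two orbits on the $k$-sets with stabiliser of order~$2$, and a proof that beyond $(6,3)$ and the special $(10,5)$ configuration none occurs; the sporadic case further demands both the exceptional action of $PSL_2(8)$ and an argument that no other group, and no other $I$, yields a $2$-regular action. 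By contrast, the generic case and the complete, matching and cocktail-party cases should be routine given Theorems~\ref{AutJ}, \ref{k-homog} and~\ref{MergedJohnsonCayley}.
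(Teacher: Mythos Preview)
Your overall plan is sound and matches the paper's strategy in the generic case $\Aut J=S_n$, in the complete-graph case~(5), and in the matching case~(4). However, your case analysis via Theorem~\ref{AutJ} is incomplete, and this creates three genuine gaps.

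First, you omit the exceptional case $J(12,4)_I$ with $I=\{1,3\}$ or $\{2,4\}$, where $\Aut J\cong GO^-_{10}(2)$ rather than $S_{12}$; here one must rule out subgroups of order $990$ by a separate ad~hoc argument using the ATLAS. Second, you omit the case $k=(n-1)/2$ with $I=k+1-I$, where $\Aut J=S_{n+1}$ acting via equipartitions of an $(n{+}1)$-set; this needs its own treatment (via Lemma~\ref{rregorbits} with $n$ replaced by $n+1$). Third, and most seriously, when $n=2k$ you only treat the subcase $I'\ne I''$, where $\Aut J=S_n\times\langle c\rangle$. When $I'=I''$ (which is exactly where the four sets $I$ in conclusion~(3) live) one has instead $\Aut J=S_2^e\rtimes S_n$, a much larger group, and your analysis of $G\cap S_n$ inside $S_n\times\langle c\rangle$ does not apply. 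The paper handles this by passing to the image $\overline G$ of $G$ in the quotient $S_n$, bounding $|G\cap S_2^e|$, and then invoking Lemma~\ref{rregorbits} with $r\le 4$ to force $n=10$ and $\overline G\cong PSL_2(8)$.

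Even after isolating $PSL_2(8)$, you still need to prove \emph{existence} of a $2$-regular action: a copy of $PSL_2(8)$ sitting inside $S_{10}$ as the ``standard'' complement in $S_2^e\rtimes S_{10}$ is intransitive on $\binom{N}{5}$, so the desired $G$ must be a \emph{non-standard} complement for $S_2^e$. The paper establishes this via group cohomology (Shapiro's Lemma gives $H^1(PSL_2(8),S_2^e)\cong V_4$, yielding three non-standard classes, each shown to act transitively). Your proposal to ``construct explicitly in Section~\ref{2-reggps}'' underestimates this step; without the cohomological argument or an equivalent explicit construction, the ``if'' direction for conclusion~(3) is not proved.
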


The structure of the proof is similar to that for Theorem~\ref{MergedJohnsonCayley}, though the details are rather different. In the cases where ${\rm Aut}\,J=S_n$, for instance, we need to consider groups which are $k$-homogeneous with stabilisers of $k$-element subsets having order $2$, rather than $1$. Proving the existence and essential uniqueness of the $2$-regular action of $PSL_2(8)$ in Theorem~\ref{MergedJohnson2reg}(3) requires non-trivial results from the cohomology of groups, such as Shapiro's Lemma; we are grateful to Ian Leary for suggesting the use of this result.

Sabidussi~\cite{Sab64} and Godsil~\cite{God}, using different but equivalent definitions, have introduced a parameter $d(\Gamma)$, called deviation or deficiency, which measures the degree to which a vertex-transitive graph $\Gamma$ differs from being a Cayley graph. We will call this the Cayley deficiency of $\Gamma$, and will use Godsil's definition of $d(\Gamma)$ as the least order of the vertex-stabilisers in any vertex-transitive group of automorphisms of $\Gamma$ (see Section~\ref{devanddef}). Thus $d(\Gamma)=1$ if and only if $\Gamma$ is a Cayley graph, and one can view Theorems~\ref{MergedJohnsonCayley} and \ref{MergedJohnson2reg} as classifying those merged Johnson graphs $J=J(n,k)_I$ which have Cayley deficiency $d(J)=1$ or $2$. By contrast, for most choices of $n, k$ and $I$, the Cayley deficiency $d(J)$ increases faster than exponentially as $n\to\infty$, so that $J$ is very far from being a Cayley graph.


\section{Johnson graphs and their automorphisms}\label{Johnsonautos}

The Johnson graphs $J(n,k)$, the distance $i$ Johnson graphs $J(n,k)_i$, the Kneser graphs $K(n,k)=J(n,k)_k$ and the merged Johnson graphs $J(n,k)_I$, defined in Section~\ref{Intro}, all have as their vertex-set the set $N\choose k$ of $k$-element subsets of an $n$-element set $N$; we will always assume that $2\le k\le n/2$, and that $\emptyset\ne I\subseteq\{1,\ldots, k\}$. We will consider all of these graphs as particular cases of the graph $J:=J(n,k)_I$, where two $k$-element subsets $K, K'\subseteq N$ are adjacent in $J$ if and only if $|K\cap K'|=k-i$ for some $i\in I$. We will generally take $N$ to be the set $\{1,\ldots, n\}$, though in some cases we may identify it with a structure such as a field or projective line with $n$ elements. 

The symmetric group $S_n$, acting naturally on $N$, preserves  the cardinalities of intersections of subsets of $N$, so it is embedded as a subgroup of ${\rm Aut}\,J$ for each $J$. In most cases $S_n$ is the full automorphism group of $J$: the exceptions are listed later in Theorem~\ref{AutJ}. In particular, ${\rm Aut}\,J=S_n$ in all cases where $2\le k<(n-1)/2$, with the exception of the complementary graphs $J(12,4)_I$ for $I=\{1,3\}$ and $\{2,4\}$. For this reason, we will first study groups $G$ of automorphisms of graphs $J=J(n,k)_I$ arising from subgroups of $S_n$.


\section{$k$-homogeneous groups}\label{k-homoggps}

A subgroup $G$ of $S_n$ is vertex-transitive on the graph $J=J(n,k)_I$ if and only if it acts transitively on $N\choose k$, that is, $G$ is a $k$-homogeneous permutation group. Such groups have been classified, and we list them for each $k\ge 2$ later in this section. (We omit the case $k=1$, since $1$-homogeneity is equivalent to transitivity, and $ J(n,1)_1$ is isomorphic to the complement of $K_n$.)

First of all, it is clear that any $k$-transitive group is $k$-homogeneous. For each $k\ge 2$ the $k$-transitive finite groups are known as a result of the classification of finite simple groups. Apart from the alternating and symmetric groups $A_n$ and $S_n$, the multiply transitive finite groups $G$ form six infinite families and one finite family, consisting of the following:
\begin{itemize}
\item affine groups, that is, various subgroups $G\le A\Gamma L_d(q)$, of degree $n=q^d$ for some prime power $q$ and $d\ge 1$;
\item projective groups, of degree $n=(q^d-1)/(q-1)$ for some prime power $q$ and $d\ge 2$, such that $PSL_d(q)\le G\le P\Gamma L_d(q)$;
\item unitary groups, of degree $n=q^3+1$ for some prime power $q$, such that $PSU_3(q)\le G\le P\Gamma U_3(q)$;
\item symplectic groups $G=Sp_{2d}(2)$, each with two representations of degrees $n=2^{d-1}(2^d\pm 1)$, for some $d\ge 3$;
\item Suzuki groups, of degree $n=q^2+1$ for some $q=2^e$ with odd $e\ge 3$, such that $Sz(q)\le G\le {\rm Aut}\,Sz(q)$;
\item Ree groups, of degree $n=q^3+1$ for some $q=3^e$ with odd $e\ge 3$, such that $Re(q)\le G\le {\rm Aut}\,Re(q)$;
\item a finite number of sporadic examples: the Mathieu groups $M_n$ for $n=11, 12, 22, 23$ and $24$, and ${\rm Aut}\,M_{22}$ for $n=22$, acting on Steiner systems; $PSL_2(11)$ acting on the cosets of $A_5$ for $n=11$; $M_{11}$ on the cosets of $PSL_2(11)$ for $n=12$; $A_7$ on the cosets of $AGL_3(2)$ for $n=15$; $P\Sigma L_2(8)\cong Re(3)$ on its Sylow $3$-subgroups for $n=28$; the Higman-Sims group $HS$ for $n=176$; the Conway group $Co_3$ for $n=276$.
\end{itemize}

(Apart from the affine groups, the groups listed here are all finite simple or almost simple groups; see~\cite{ATLAS} or~\cite{Wil} for the definitions and general properties of these groups. For a detailed description of them as multiply transitive permutation groups, see~\cite[\S7.7]{DM}, though it omits ${\rm Aut}\,M_{22}$, or~\cite[Chapter~7]{Cam99} for a more concise but complete list.) The degrees of the groups in these seven families form a subset of $\mathbb N$ of asymptotic density~$0$; see~\cite{CNT} for a more precise estimate of the density of this set.

None of the groups in this list is $6$-transitive, and the only $5$-transitive groups are $M_{12}$ and $M_{24}$. In addition to these, the only $4$-transitive groups are $M_{11}$ and $M_{23}$, and the only $3$-transitive groups are various subgroups of $AGL_d(2)$ and $P\Gamma L_2(q)$, together with $M_{11}$ (for $n=12$), $M_{22}$ and ${\rm Aut}\,M_{22}$. All the remaining groups are merely $2$-transitive.

In most cases the converse is also true, that is, a $k$-homogenous group is in fact $k$-transitive. The following theorem, based on results of Livingstone and Wagner~\cite{LW} and Kantor~\cite{Kan72}, shows where the exceptions occur:

\begin{theorem}\label{k-homog}
Let $G$ be a $k$-homogeneous permutation group of finite degree $n$, where $2\le k\le n/2$. Then $G$ is $(k-1)$-transitive. Moreover, $G$ is $k$-transitive unless either:
\begin{enumerate}
\item $k=2$ and $G\le A\Gamma L_1(q)$ for some prime power $n=q\equiv 3$ {\rm mod}~$(4)$, or
\item $k=3$ and $PSL_2(q)\le G\le P\Gamma L_2(q)$ for some prime power $q\equiv 3$ {\rm mod}~$(4)$, with $n=q+1$, or
\item $k=3$ and $G=AGL_1(8)$, $A\Gamma L_1(8)$ or $A\Gamma L_1(32)$ with $n=8, 8$ or $32$, or
\item $k=4$ and $G=PSL_2(8)$, $P\Gamma L_2(8)$ or $P\Gamma L_2(32)$ with $n=9, 9$ or $33$.
\end{enumerate}
In each case, the permutation representation of $G$ is the natural one. 
\end{theorem}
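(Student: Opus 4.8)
The plan is to treat the two assertions separately and, for the second, to split according to the value of $k$. The claim that $k$-homogeneity forces $(k-1)$-transitivity, together with the sharper statement that it forces $k$-transitivity once $k\ge 5$, is precisely the theorem of Livingstone and Wagner, so here I would quote \cite{LW} directly. For orientation I would recall the two ideas behind it. Homogeneity descends by a linear-algebra argument: the inclusion (containment) matrix between $t$-subsets and $k$-subsets has full row rank $\binom{n}{t}$ whenever $t\le k$ and $t+k\le n$, so the rational permutation module on $t$-subsets embeds in the one on $k$-subsets as an $S_n$-module. Comparing the multiplicity of the trivial $G$-character then gives $f_t\le f_k$ for the numbers $f_i$ of $G$-orbits on $i$-subsets, so $f_k=1$ forces $f_t=1$ for every $t\le k\le n-t$. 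The promotion of $(k-1)$-homogeneity to $(k-1)$-transitivity, and of homogeneity to full transitivity in the range $k\ge 5$, is the more delicate parity argument of Livingstone and Wagner, which I would cite rather than reprove.

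This already disposes of every $k\ge 5$, so the remaining task is to classify, for $k=2,3,4$, the $k$-homogeneous groups that fail to be $k$-transitive; these are exactly the exceptions (1)--(4), and this classification is Kantor's \cite{Kan72}. For $k=2$ I would argue from scratch. A $2$-homogeneous group is primitive, and if it is not $2$-transitive then a point-stabiliser has exactly two orbits of equal length on the remaining points. A classical solubility argument shows such a group is of affine type, whence $n=q$ is a prime power and $G\le A\Gamma L_1(q)$; and fusing the two pair-orbits $\{0,s\}$ (with $s$ a square) and $\{0,-s\}$ under a translation is possible precisely when $-1$ is a non-square, i.e.\ $q\equiv 3\pmod 4$. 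This gives case~(1).

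For $k=3$ and $k=4$ I would use the first assertion to confine $G$ to a short list: being respectively $2$-transitive or $3$-transitive, $G$ must occur in the classification of multiply transitive groups recalled earlier in this section. I would then run through that list family by family, testing $3$- (respectively $4$-) homogeneity against the corresponding transitivity. The governing example is $PSL_2(q)\le G\le P\Gamma L_2(q)$ on the projective line of $q+1$ points: since $PGL_2(q)$ is sharply $3$-transitive, whether a subgroup is transitive on unordered triples is a cross-ratio computation, and one finds that $3$-homogeneity without $3$-transitivity occurs exactly for $q\equiv 3\pmod 4$, giving case~(2). The low-degree entries of cases~(3) and~(4)---the affine groups $AGL_1(8),A\Gamma L_1(8),A\Gamma L_1(32)$ and the projective groups $PSL_2(8),P\Gamma L_2(8),P\Gamma L_2(32)$---are exceptions peculiar to characteristic $2$, where $PSL_2(q)=PGL_2(q)$ is already sharply $3$-transitive; here a decisive necessary test is the divisibility $\binom{n}{k}\mid |G|$, which for instance rules out $AGL_1(32)$ at $k=3$ yet is met (with trivial subset-stabiliser) once the Frobenius automorphism is adjoined to form $A\Gamma L_1(32)$.

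The main obstacle is this last, case-by-case verification for $k=3,4$: in contrast to the uniform Livingstone--Wagner argument, it requires checking each family of $2$- and $3$-transitive groups individually, and the characteristic-$2$ exceptions must be detected by hand rather than excluded by a single structural principle. Once these are confirmed, and no further $k$-homogeneous non-$k$-transitive groups are found, the stated list is complete; the final remark that every representation is the natural one then follows from the uniqueness, up to equivalence, of the relevant transitive action of each listed group in its prescribed degree.
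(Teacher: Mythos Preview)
The paper does not prove this theorem at all: it is stated as a known result, attributed to Livingstone and Wagner~\cite{LW} and Kantor~\cite{Kan72}, with no argument supplied. Your proposal cites exactly the same two sources and is consistent with that; you simply go further by sketching the content of those papers.

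One comment on your sketch for $k=3,4$. You propose to use the list of $2$- and $3$-transitive groups given just before the theorem and then check each family for $k$-homogeneity without $k$-transitivity. That list depends on the classification of finite simple groups, whereas Kantor's 1972 arguments are CFSG-free and more direct (working inside $P\Gamma L_2(q)$ and $A\Gamma L_1(q)$ via cross-ratio and orbit calculations, not via an external enumeration). Your route is logically valid and does reach the stated conclusion, but it is considerably heavier than what the cited references actually do; if you intend to present this as a proof rather than a citation, it would be cleaner either to follow Kantor's original line or to flag explicitly that you are invoking CFSG for convenience. Your $k=2$ sketch also leans on Feit--Thompson (via ``odd order implies soluble''), which again is correct but worth acknowledging.
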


(Note that each of the groups $G$ in cases~(3) and (4) is $k$-homogeneous but not $k$-transitive, whereas this applies to only some of the groups described in cases (1) and (2): for instance $A\Gamma L_1(q)$ and $P\Gamma L_2(q)$ are $k$-transitive for $k=2$ and $3$ respectively.)

It will be useful to consider, from among the multiply homogeneous groups listed in this section, the set ${\mathcal H}_k$ of $k$-homogeneous groups, other than $S_n$ and $A_n$, for each $k\ge 2$, together with the set ${\mathcal D}_k$ of their degrees. Since ${\mathcal H}_{k+1}\subseteq{\mathcal H}_k$ for each $k$, it is sufficient to describe the sets ${\mathcal H}_k\setminus{\mathcal H}_{k+1}$. We have
\begin{itemize}
\item ${\mathcal H}_k=\emptyset$ for all $k\ge 6$;
\item ${\mathcal H}_5=\{M_{12}, M_{24}\}$\;
\item ${\mathcal H}_4\setminus{\mathcal H}_5=\{M_{11}, M_{23}, PSL_2(8), P\Gamma L_2(8), P\Gamma L_2(32)\}$;
\item ${\mathcal H}_3\setminus{\mathcal H}_4$ consists of various subgroups of $A\Gamma L_d(2)$ and $P\Gamma L_2(q)$, and also $M_{11}$ (for $n=12$), $M_{22}$, ${\rm Aut}\,M_{22}$, $AGL_1(8)$, $A\Gamma L_1(8)$ and $A\Gamma L_1(32)$;
\item ${\mathcal H}_2\setminus{\mathcal H}_3$ consists of the remaining groups listed above.
\end{itemize}

By considering the degrees of these groups, stated above, we see that
\begin{itemize}
\item ${\mathcal D}_k=\emptyset$ for all $k\ge 6$;
\item ${\mathcal D}_5=\{12, 24\}$;
\item ${\mathcal D}_4=\{9, 11, 12, 23, 24, 33\}$; 
\item ${\mathcal D}_3$ consists of the integers $2^d$ where $d\ge 3$, and $q+1$ where $q$ is a prime power, together with $11, 22$ and $23$;
\item ${\mathcal D}_2$ consists of all the degrees $n$ given in the list of multiply transitive groups at the start of this section (the groups in Theorem~\ref{k-homog} contribute no further degrees).
\end{itemize}

This shows that if $k\ge 6$, or if $2\le k\le 5$ and $n$ avoids an easily described subset of $\mathbb N$ of asymptotic density $0$, then the only $k$-homogeneous groups of degree $n$ are $S_n$ and $A_n$. In fact, $k$-homogeneous groups for $k\ge 2$ are primitive, and Cameron, Neumann and Teague~\cite{CNT} have shown that for all but a set of integers $n$ of asymptotic density $0$ the only primitive groups of degree $n$ are $S_n$ and $A_n$.


\section{$r$-regular groups from $k$-homogeneous groups}\label{r-reggps}

Motivated by~\cite{gau} and~\cite{JJ}, we can now apply the information in the preceding section to obtain $r$-regular groups of automorphisms of various merged Johnson graphs from $k$-homogeneous subgroups of $S_n$. If $G$ is any vertex-transitive group of automorphisms of $J=J(n,k)_I$, it forms an $r$-regular family for $J$ where $r$ is the order $|G|/{n\choose k}$ of the vertex-stabilisers. In particular, if $G$ is a $k$-homogeneous subgroup of $S_n$ then it acts on $J$ as an $r$-regular group of automorphisms, where $r$ is the order of the subgroup of $G$ stabilising a $k$-element subset $K\subseteq N$.

For us, concerned with Cayley graphs, the case $r=1$ is the most important. The corresponding subgroups $G$ of $S_n$ are the {\sl sharply $k$-homogeneous\/} groups, those acting regularly on $k$-element subsets, so if such a group $G$ exists then $J(n,k)_I$ is a Cayley graph. Since $k\ge 2$, $G$ cannot be $k$-transitive, so it must be one of the exceptional groups described in Theorem~\ref{k-homog}. Case~(1), where $k=2$, has been dealt with by Kantor in~\cite{Kan69}. Case~(2), with $k=3$, cannot arise, since $G$ contains the element $t\mapsto 1/(1-t)$ of $PSL_2(q)$, which preserves the subset $\{0, 1, \infty\}$. Cases~(3) and (4) are easily dealt with by comparing $|G|$ with $n\choose k$. As a result, we have the following well-known easy consequence of~\cite[Theorem 1]{Kan72}:

\begin{corollary}\label{sharpk-homog} 
Let $G$ be a permutation group $G$ of finite degree $n$. Then $G$ is sharply $k$-homogeneous for some $k\ge 2$ if and only if either
\begin{enumerate}
\item $k=2$ and $G\cong AHL_1(F)$ for some Dickson near-field $F$ of prime power order $n\equiv 3$ {\rm mod}~$(4)$, or
\item $k=3$ and $G\cong AGL_1(8)$ or $A\Gamma L_1(32)$.
\end{enumerate}
\end{corollary}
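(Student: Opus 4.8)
The plan is to derive Corollary~\ref{sharpk-homog} directly from Theorem~\ref{k-homog}, which already lists all the cases where a $k$-homogeneous group fails to be $k$-transitive. The starting observation is that a sharply $k$-homogeneous group with $k\ge 2$ cannot be $k$-transitive, since $k$-transitivity forces the stabiliser of a $k$-element subset $K$ to contain the full symmetric group $\mathrm{Sym}(K)\cong S_k$, which has order $k!>1$, contradicting regularity on $k$-sets. Hence any sharply $k$-homogeneous group must appear among the four exceptional families (1)--(4) of Theorem~\ref{k-homog}, and the task reduces to testing each of these four cases for the sharpness condition $|G_K|=1$, equivalently $|G|={n\choose k}$.

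First I would dispose of case~(4) of Theorem~\ref{k-homog} immediately: here $k=4$ but these groups are only $4$-homogeneous, not $4$-transitive, yet a direct comparison of orders rules out sharpness. For the genuine candidates I would proceed case by case. In case~(1), where $k=2$ and $G\le A\Gamma L_1(q)$ with $q\equiv 3$ mod~$(4)$, sharp $2$-homogeneity means $|G|={n\choose 2}=q(q-1)/2$; this is precisely the order of the index-$2$ subgroup $AHL_1(F)$ of $AGL_1(F)$, and I would invoke Kantor~\cite{Kan69} (as the excerpt explicitly permits) to identify the sharply $2$-homogeneous groups as exactly the groups $AHL_1(F)$ for Dickson near-fields $F$ of such order, yielding conclusion~(1). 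In case~(2), where $k=3$ and $PSL_2(q)\le G$ with $n=q+1$, I would give the short argument already flagged in the excerpt: $PSL_2(q)\le G$ contains the Möbius transformation $t\mapsto 1/(1-t)$, a $3$-cycle permuting the set $\{0,1,\infty\}$, so this $3$-element subset has a nontrivial stabiliser and $G$ is not sharp; thus case~(2) contributes nothing.

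In case~(3), where $k=3$ and $G$ is one of $AGL_1(8)$, $A\Gamma L_1(8)$ or $A\Gamma L_1(32)$, I would simply compute orders against ${n\choose 3}$. For $n=8$ we have ${8\choose 3}=56=|AGL_1(8)|$, so $AGL_1(8)$ is sharply $3$-homogeneous, whereas $|A\Gamma L_1(8)|=3\cdot 56>56$ is not. For $n=32$ we have ${32\choose 3}=4960$ and $|A\Gamma L_1(32)|=32\cdot 31\cdot 5=4960$, so $A\Gamma L_1(32)$ is sharp; this gives conclusion~(2). Collecting the surviving cases yields exactly the two families stated, and conversely each listed group is sharply $k$-homogeneous by the order computation together with its membership in the appropriate homogeneity class, completing both directions of the equivalence.

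I do not anticipate a serious obstacle here, since Theorem~\ref{k-homog} has already done the heavy classification work and the remaining content is a finite, essentially arithmetic, sifting of its four cases. The only step requiring care — and the one I would treat as the crux — is the precise identification in case~(1): establishing that \emph{every} sharply $2$-homogeneous group of degree $q\equiv 3$ mod~$(4)$ is isomorphic to $AHL_1(F)$ for a Dickson near-field $F$, rather than merely verifying that these particular groups are sharp. For this I would lean entirely on Kantor's result~\cite{Kan69}, as the excerpt indicates, treating the near-field classification as a black box rather than reproving it.
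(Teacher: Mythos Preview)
Your proposal is correct and follows essentially the same route as the paper: reduce to the exceptional list in Theorem~\ref{k-homog} via the observation that a sharply $k$-homogeneous group with $k\ge 2$ cannot be $k$-transitive, then invoke Kantor~\cite{Kan69} for case~(1), eliminate case~(2) using the element $t\mapsto 1/(1-t)$ of $PSL_2(q)$ fixing $\{0,1,\infty\}$, and dispose of cases~(3) and~(4) by comparing $|G|$ with ${n\choose k}$. The paper's argument is terser but structurally identical, and your added detail (the explicit order computations and the reason $k$-transitivity forces $|G_K|\ge k!$) is exactly what the paper leaves implicit.
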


It follows that $J(n,k)_I$  is a Cayley graph in each of these cases, for any non-empty set $I$.

The case $r=2$ is also of interest. For later use, in proving Theorem~\ref{MergedJohnson2reg} and in~\cite{JJ}, we also note that if $F$ is a near-field of (necessarily prime power) order $n$ then the group $G=AGL_1(F)$ is sharply $2$-transitive on $F$, so it acts $2$-regularly on $J(n,2)_I$ for any $I$.


\section{Automorphism groups of merged Johnson graphs}\label{autogps}

The above examples all arise from subgroups of $S_n$, which has an induced action on each merged Johnson graph $J=J(n,k)_I$. However, the following result~\cite[Theorem~2]{Jon} shows that in some cases ${\rm Aut}\,J$ is strictly larger than $S_n$, leading to the possibility of further vertex-transitive groups of automorphisms. First we need some notation: define $e={n\choose k}/2$, and for any $I\subseteq\{1,\ldots, k\}$ define $I'=I\setminus\{k\}$ and $I''=k-I'=\{k-i\mid i\in I'\}$. We will assume that $I\ne\{1,\ldots, k\}$, since otherwise $J(n,k)_I$ is a complete graph and ${\rm Aut}\,J$ is the symmetric group on $N\choose k$.

\begin{theorem}\cite[Theorem~2]{Jon}\label{AutJ}
 Let $J=J(n,k)_I$ where $2\leq k\leq n/2$ and $I$ is a non-empty proper subset of $\{1,\ldots, k\}$.

\begin{enumerate}

\item If $2\leq k<(n-1)/2$, and $J\neq J(12,4)_I$ with $I=\{1,3\}$ or $\{2,4\}$, then ${\rm Aut}\,J=S_n$.

\item If $(n,k)=(12,4)$ with $I=\{1,3\}$ or $\{2,4\}$, then ${\rm Aut}\,J=GO^{-}_{10}(2)$. 

\item If $k=(n-1)/2$ and $I\neq k+1-I$, then ${\rm Aut}\,J=S_n$.

\item If $k=(n-1)/2$ and $I=k+1-I$, then ${\rm Aut}\,J=S_{n+1}$.

\item If $k=n/2$ and $I\neq\{k\}$ or $\{1,\ldots,k-1\}$, and $I'\neq I''$, then
${\rm Aut}\,J=S_2\times S_n$.

\item If $k=n/2$ and $I\neq\{k\}$ or $\{1,\ldots,k-1\}$, and $I'=I''$, then
${\rm Aut}\,J=S_2^e\rtimes S_n$.

\item If $k=n/2$ and $I=\{k\}$ or $\{1,\ldots,k-1\}$, then ${\rm Aut}\,J=S_2^e\rtimes S_e=S_2\wr
S_e$.

\end{enumerate}
\end{theorem}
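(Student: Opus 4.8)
The plan is to prove Theorem~\ref{AutJ} by analysing the structure of the merged Johnson graph $J=J(n,k)_I$ as an overlay of the distance $i$ Johnson graphs, and determining when extra automorphisms beyond $S_n$ can arise. The starting observation is that $S_n$ always embeds in $\Aut\,J$, so the whole task is to decide, for each regime of the parameters $(n,k,I)$, whether $\Aut\,J$ is strictly larger. Since the distance $i$ Johnson graphs $J(n,k)_i$ are precisely the graphs in the \emph{Johnson scheme}, a natural framework is the theory of association schemes: the adjacency matrices $A_0,A_1,\dots,A_k$ of the $J(n,k)_i$ span a commutative Bose--Mesner algebra, and $A(J)=\sum_{i\in I}A_i$. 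Any automorphism of $J$ permutes the eigenspaces compatibly, and a graph automorphism of $J$ that is \emph{not} in $S_n$ must arise either from a nontrivial automorphism of the underlying scheme or from a coincidence of the relation graphs forced by the special arithmetic of the parameters. So the governing principle is: $\Aut\,J>S_n$ exactly when some outer symmetry of the Johnson scheme, or some exceptional isomorphism of the vertex set, is consistent with the chosen relation set $I$.

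The key steps, in order, would be as follows. First I would record the two intrinsic symmetries of the scheme. The complementation map $K\mapsto N\setminus K$ is defined precisely when $k=n/2$, and it sends the distance $i$ relation to the distance $k-i$ relation on the complementary vertex (for $i<k$) while fixing the Kneser relation $i=k$; hence it is a graph automorphism of $J$ exactly when $I$ is invariant under $i\mapsto k-i$ on $\{1,\dots,k-1\}$, i.e.\ $I'=I''$, and either both or neither of the fixed relations is involved. This accounts for the $S_2$ factors and explains the split between cases~(5) and (6) and why case~(7) behaves differently. Second, I would treat the ``triality''-type collapse at $k=(n-1)/2$: here the Kneser graph $J(n,k)_k$ coincides with the adjacency structure one would read off from an $(n+1)$-point set, because a $k$-subset of an $n$-set and its relationship to complements corresponds to data on $n+1$ points; this produces the enlargement to $S_{n+1}$ in case~(4), and again the condition $I=k+1-I$ controls when the relevant relations match up. Third, the sporadic case $(n,k)=(12,4)$ with $I=\{1,3\}$ or $\{2,4\}$ requires identifying $J$ with a graph on which an orthogonal group acts: one shows the $495$ vertices and the chosen relations coincide with a known strongly regular or rank-$3$ graph admitting $GO^{-}_{10}(2)$, using the exceptional isomorphism linking the Johnson scheme $J(12,4)$ parameters to a quadric over $\mathbb{F}_2$.

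To pin down \emph{all} the exceptions and show there are no others, I would invoke the classification of vertex-transitive groups $G$ with $S_n\le G\le\Aut\,J$: any such $G$ acts on $N\choose k$, and if $G\ne S_n$ then either $G$ is still realised inside a symmetric group on a relabelled point set (giving $S_{n+1}$ or $S_2\times S_n$) or $G$ normalises the $S_2^e$ coming from independent complementation-type involutions, yielding the wreath-product and semidirect-product structures in cases~(6) and (7). The bound $2\le k<(n-1)/2$ with the single $J(12,4)$ exception in case~(1) then follows from the rigidity of the Johnson scheme away from the boundary cases $k=(n-1)/2$ and $k=n/2$: for these interior parameters the scheme has no nontrivial automorphism, so the only coincidences are the sporadic one at $(12,4)$. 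I expect the main obstacle to be precisely this rigidity step, namely proving that for $2\le k<(n-1)/2$ (outside the one sporadic parameter) no relation set $I$ admits an automorphism outside $S_n$. This amounts to showing the Johnson scheme is \emph{Schurian and rigid} in that range, which I would establish by an eigenvalue/primitivity argument: the nontrivial eigenspaces of the $A_i$ are pairwise distinct $S_n$-irreducibles of unequal dimension, so any graph automorphism must preserve each eigenspace setwise and hence centralises the $S_n$-action, forcing it into $S_n$ itself; the only way this fails is a dimension coincidence, and checking when such coincidences occur isolates exactly the boundary and sporadic cases.
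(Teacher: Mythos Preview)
This theorem is not proved in the present paper at all: it is quoted verbatim from~\cite[Theorem~2]{Jon} and used as a black box in the proofs of Theorems~\ref{MergedJohnsonCayley} and~\ref{MergedJohnson2reg}. So there is no ``paper's own proof'' to compare your sketch against; any assessment has to be of your proposal on its own merits and, if anything, against the argument in~\cite{Jon}.

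On those merits, your outline identifies the right extra symmetries (complementation when $k=n/2$, the $(n{+}1)$-point interpretation when $k=(n-1)/2$, the sporadic link to $GO_{10}^-(2)$ at $(12,4)$), but the final ``rigidity'' step contains a genuine gap. You write that the nontrivial eigenspaces of the $A_i$ are pairwise distinct $S_n$-irreducibles of unequal dimension, so any graph automorphism preserves each eigenspace and ``hence centralises the $S_n$-action, forcing it into $S_n$ itself''. Two problems. First, the eigenspaces of $A(J)=\sum_{i\in I}A_i$ need not be the individual Specht-type constituents: for many choices of $I$ the eigenvalues of $A(J)$ on different constituents coincide, so an automorphism of $J$ is only constrained to preserve certain \emph{sums} of these modules, not each one separately. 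Controlling exactly when such eigenvalue coincidences occur is itself a nontrivial combinatorial problem and is part of what singles out the boundary and sporadic cases. Second, and more seriously, even when the eigenspaces are precisely the individual $S_n$-irreducibles, ``preserves each eigenspace'' does \emph{not} imply ``centralises $S_n$''. A permutation of ${N\choose k}$ that stabilises every common eigenspace of the $A_i$ is, by definition, an automorphism of the Johnson association scheme; showing that this automorphism group equals $S_n$ (respectively $S_n\times S_2$ when $k=n/2$) is exactly the substantive content you are trying to prove, not a consequence of a centraliser argument. The actual proof in~\cite{Jon} proceeds differently, by reconstructing the underlying Johnson graph (or an appropriate quotient or extension of it) from $J$ via combinatorial invariants of neighbourhoods, thereby reducing to the known automorphism groups of $J(n,k)$ and its folded and doubled variants; your eigenspace route would need a comparably hard step to close the circle.
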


In all cases, ${\rm Aut}\,J$ contains $S_n$ with its induced action on $k$-element subsets. In cases (1) and (3) this is the full automorphism group, whereas in the remaining cases, namely (2) and (4) to (7), the automorphism groups are larger; these are described in Section~\ref{Cayleyprops}, in the proof of Theorem~\ref{MergedJohnsonCayley}, where we first need to use them.

\begin{corollary}\label{onlyAnSn}
 Let $J=J(n,k)_I$ where $I$ is a non-empty proper subset of $\{1,\ldots, k\}$. Suppose that either
 \begin{enumerate}
 \item $5<k<(n-1)/2$, or
  \item $5<k=(n-1)/2$ and $I\neq k+1-I$, or
 \item $5=k<(n-1)/2$ and $n\ne 12, 24$, or
 \item $4=k<(n-1)/2$ and $n\ne 9, 11, 12, 23, 24, 33$.

 \end{enumerate}
 Then the only vertex-transitive groups of automorphisms of $J$ are $A_n$ and $S_n$, each acting naturally.
\end{corollary}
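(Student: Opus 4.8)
The plan is to reduce Corollary~\ref{onlyAnSn} to a direct application of the two main structural theorems stated earlier: Theorem~\ref{AutJ}, which identifies $\mathrm{Aut}\,J$, and the classification of $k$-homogeneous groups encoded in Theorem~\ref{k-homog} together with the sets $\mathcal{H}_k$ and $\mathcal{D}_k$. The underlying principle is that a group of automorphisms of $J=J(n,k)_I$ is vertex-transitive precisely when it acts transitively on $\binom{N}{k}$, i.e.\ is $k$-homogeneous. So the strategy splits naturally into two halves: first pin down $\mathrm{Aut}\,J$ in each of the four cases, then use the $k$-homogeneity classification to show the only candidate vertex-transitive subgroups are $A_n$ and $S_n$.

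First I would observe that in each of the four hypothesis cases we have $2\le k<(n-1)/2$ with the parameters avoiding the $J(12,4)_I$ exceptions (for $I=\{1,3\},\{2,4\}$), or else $k=(n-1)/2$ with $I\ne k+1-I$; by Theorem~\ref{AutJ}(1) and (3) this forces $\mathrm{Aut}\,J=S_n$. I should double-check that the exceptional graph $J(12,4)_I$ is excluded: case (4) explicitly removes $n=12$ (with $k=4$), so this pathological case does not intrude. Once $\mathrm{Aut}\,J=S_n$ is established, any vertex-transitive group of automorphisms $G$ is a subgroup of $S_n$ acting transitively on $k$-element subsets, hence $G$ is a $k$-homogeneous permutation group of degree $n$.

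Next I would invoke the classification via $\mathcal{H}_k$ and $\mathcal{D}_k$. If $G\ne A_n,S_n$ then $G\in\mathcal{H}_k$, so its degree lies in $\mathcal{D}_k$. For case (1), $k\ge 6$, we have $\mathcal{H}_k=\emptyset$, so no such $G$ exists. For case (2), $k=(n-1)/2\ge 6$, the same emptiness applies. For case (3), $k=5$, we have $\mathcal{D}_5=\{12,24\}$, and since $n\ne 12,24$ is assumed, no such $G$ exists. For case (4), $k=4$, we have $\mathcal{D}_4=\{9,11,12,23,24,33\}$, and the hypothesis excludes exactly these six values of $n$, so again $G\in\mathcal{H}_4$ is impossible. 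In each case we conclude $G\in\{A_n,S_n\}$, and both are indeed vertex-transitive (being $k$-transitive, a fortiori $k$-homogeneous), with their natural actions on $N$; this gives the claim.

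The only point requiring genuine care—rather than obstacle—is confirming that the inequalities in the hypotheses keep us strictly inside the regime where Theorem~\ref{AutJ} gives $\mathrm{Aut}\,J=S_n$, and that the excluded degree sets match $\mathcal{D}_k$ exactly. In particular, I would verify that the strict inequality $k<(n-1)/2$ (in cases (1), (3), (4)) rules out both the $k=(n-1)/2$ and $k=n/2$ rows of Theorem~\ref{AutJ} where $\mathrm{Aut}\,J$ properly contains $S_n$, while case (2) handles the $k=(n-1)/2$ row by the condition $I\ne k+1-I$ which lands us in Theorem~\ref{AutJ}(3) rather than (4). The proof is thus essentially a bookkeeping argument chaining the two prior theorems, with no step presenting a real difficulty once the case analysis is aligned with the statement of Theorem~\ref{AutJ} and the explicit description of $\mathcal{D}_k$.
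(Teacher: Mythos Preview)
Your proposal is correct and follows essentially the same approach as the paper's proof: use Theorem~\ref{AutJ} to obtain ${\rm Aut}\,J=S_n$ in all four cases, then invoke the classification of $k$-homogeneous groups (via $\mathcal{H}_k$ and $\mathcal{D}_k$) to conclude that only $A_n$ and $S_n$ remain. Your write-up is in fact slightly more explicit than the paper's in verifying that the $J(12,4)_I$ exception and the $k=(n-1)/2$ case are handled by the stated hypotheses.
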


\begin{proof}
Theorem~\ref{AutJ} ensures that ${\rm Aut}\,J=S_n$ in all these cases, so that any vertex-transitive group of automorphisms of $J$ must be a $k$-homogeneous subgroup of $S_n$. As shown in Section~\ref{k-homoggps}, for $k>5$ the only such groups are $A_n$ and $S_n$. The same conclusion applies for $k=5$ provided $n\ne 12, 24$, so that the $5$-homogeneous Mathieu groups of those degrees are avoided. Similarly, one can take $k=4$ provided $n\ne 9, 11, 12, 23, 24, 33$, so that case~(2) of Theorem~\ref{AutJ} and the $4$-homogeneous groups in ${\mathcal H}_4$ are avoided.
\end{proof}

Similar results apply for $k=2$ and $k=3$, but in each case one has to exclude an infinite set ${\mathcal H}_k$ of values of $n$, of asymptotic density $0$.


\section{Deviation and deficiency}\label{devanddef}

As mentioned in the Introduction, Cayley graphs are believed to 
constitute a significant part of the class of vertex-transitive graphs. 
By definition, every vertex-transitive graph admits a vertex-transitive
group of automorphisms, which may or may not be its full automorphism 
group. By a well-known result of Sabidussi~\cite{Sab64}, 
each Cayley graph $\Gamma$ admits a vertex-transitive group of automorphisms $G$ of order $|G|=|V(\Gamma)|$, which again may or may not be the full automorphism group. 
Thus the ratio between the order of a smallest vertex-transitive subgroup of ${\rm Aut}\,\Gamma$ (rather than that of ${\rm Aut}\,\Gamma$ itself) and the order of the graph $\Gamma$ appears to be a good indicator of how far $\Gamma$ is from being a Cayley graph. 

Other such indicators have also been proposed. In~\cite{Sab64} Sabidussi defined the {\em deviation\/} ${\rm dev}(\Gamma)$ of a vertex-transitive graph $\Gamma$ to be the smallest integer $r$ such that the 
lexicographic product }$\Gamma[\overline{K_r}]$ is a Cayley graph. (Here 
$\overline{K}_r$  is the null graph with $r$ vertices and no edges, 
and the lexicographic product $\Gamma[\overline{K_r}]$ is a covering of $\Gamma$ consisting of $|V(\Gamma)|$ disjoint copies of $\overline{K_r}$, each forming the fibre over a vertex of $\Gamma$,
with two such fibres joined to make a complete bipartite graph $K_{r,r}$
if and only if the vertices they cover are adjacent in $\Gamma$.) In particular $\Gamma[\overline{K_1}]\cong\Gamma$, so ${\rm dev}(\Gamma)=1$ if and only if $\Gamma$ is a Cayley graph.

Godsil~\cite{God} defined the {\em deficiency\/} $d(\Gamma)$ of a vertex-transitive graph $\Gamma$ to be the least order $|G_v|$ of the vertex-stabilisers $G_v$ ($v\in V(\Gamma)$) in any vertex-transitive group $G$ of automorphisms of $\Gamma$. Again, $d(\Gamma)=1$ if and only if $\Gamma$ is a Cayley graph.

In \cite{JMM}, Malni\v c, Maru\v si\v c and the second author showed that
if $G$ is any vertex-transitive group of automorphisms of a graph $\Gamma$, then the Cayley graph of $G$, with the connecting set consisting of the elements of $G$ mapping a specific vertex $v$ of $\Gamma$ to its neighbors, is isomorphic to $ \Gamma(\overline{K_r}) $ where $r=|G_v|$. 
Thus Sabidussi's deviation ${\rm dev}(\Gamma)$ is equal to Godsil's deficiency $d(\Gamma)$, and since $|G_v|=|G|/|V(\Gamma)|$,  it is also equal to the ratio of the order of a smallest vertex-transitive 
group of automorphisms and the order of the graph --- the parameter we discussed at the beginning of this section. Thus all three 
are actually the same parameter, measuring how much a
vertex-transitive graph deviates from being a Cayley graph. In this paper, we will call this parameter
the {\em Cayley deficiency\/} $d(\Gamma)$. The bigger the Cayley deficiency, the 
further the graph is from being a Cayley graph, with Cayley graphs being those of
Cayley deficiency $1$.

For example, if $\Gamma$ is one of the merged Johnson graphs $J=J(n,k)_I$ listed in Corollary~\ref{onlyAnSn}, the smallest vertex-transitive 
subgroup of $ {\rm Aut}\, J $ is $A_n$, so
$d(J)=n!/2{n \choose k} = k!(n-k)!/2 $.
If $n\to\infty$ with $k$ fixed, then $d(J)$ grows faster than exponentially, while the order $n\choose k$ of $J$ has polynomial growth, of degree $k$. Thus if $n\gg k > 5 $ then $J$ is very far from being a Cayley graph.

On the other hand, even though the automorphism group of
a merged Johnson graph $J(n,k)_I$ always contains $S_n$, in cases
other than those listed in Corollary~\ref{onlyAnSn} the group $A_n$ need not be a smallest 
vertex-transitive subgroup of ${\rm Aut}\,J$. For instance the $r$-regular groups of automorphisms arising from $k$-homogeneous permutation groups, discussed in Section~\ref{r-reggps}, give examples of this with relatively small values of $r$, and hence of $d(J)$ since $d(J)\le r$. The aim of the rest of this paper is to classify the merged Johnson graphs $J$ with the smallest Cayley deficiencies, namely those with $d(J)=1$ or $2$.


\section{Cayley properties of merged Johnson graphs}\label{Cayleyprops}

We are now ready to prove Theorem~\ref{MergedJohnsonCayley}, which classifies the merged Johnson graphs of Cayley deficiency $d(J)=1$.

\noindent{\sl Proof.} We first deal with two trivial cases. If $I=\{1,\ldots, k\}$, as in Theorem~\ref{MergedJohnsonCayley}(4), then the graph $J:=J(n,k)_I$ is a complete graph on $n\choose k$ vertices; the groups $G$ which can then act regularly as a group of automorphisms of $J$ are those of order $n\choose k$, so that $J$ is a Cayley graph for $G$ with respect to the generating set $G\setminus\{1\}$.

Similarly, if $k=n/2$ and $I=\{1, \ldots, k-1\}$, as in conclusion~(5), then $J$ is this same complete graph minus a complete matching, given by complementation; in this case $G$ can act as a regular group of automorphisms of $J$ if and only if it has order $n\choose k$, so that $J$ is a Cayley graph for $G$ with respect to the generating set $G\setminus T$ for some subgroup $T$ of order $2$ (which exists since $n\choose k$ is even). The same regular groups arise if $k=n/2$ and $I=\{k\}$, giving the complementary graph $J$, the disjoint union of $e={n\choose k}/2$ copies of $K_2$; however, this graph is not connected, so it is not a Cayley graph. Having dealt with these cases, we will assume for the rest of this proof that $I\ne\{1,\ldots, k\}$, and that if $k=n/2$ then $I$ is neither $\{k\}$ nor $\{1,\ldots, k-1\}$.

In each of cases~(1), (2) and (3), Corollary~\ref{sharpk-homog} shows that the specified group $G$ acts sharply $k$-homogeneously on an $n$-element set $N$, inducing a regular group of automorphisms of $J$. 

It remains to prove the converse, that these are the only cases in which $J$ is a Cayley graph for a group $G$. We will do this by a case-by-case analysis of the various possibilities for ${\rm Aut}\,J$ (and hence for $G$), as classified in Theorem~\ref{AutJ}.

(a) First suppose that $G$ is contained in $S_n$, the symmetric group on $N$. (By Theorem~\ref{AutJ} this includes all cases in which $k<(n-1)/2$ except $J=J(12,4)_I$ with $I=\{1,3\}$ or $\{2,4\}$.) Then $G$ acts on $N$ as a sharply $k$-homogeneous group, so Corollary~\ref{sharpk-homog} implies that $G$ and $J$ satisfy conclusion~(1), (2) or (3). (This is essentially the argument used by Godsil~\cite{God} to deal with the Kneser graphs $K(n,k)$; as noted by Dobson and Malni\v c~\cite{DoMa} it also applies to the Johnson graphs $J(n,k)$ with $k<n/2$.)

(b) Next we deal with the exceptional graphs $J=J(12,4)_I$, with $I=\{1,3\}$ or $\{2,4\}$. By Theorem~\ref{AutJ} these two complementary graphs have as their automorphism group the general orthogonal group $GO^-_{10}(2)$ in ATLAS notation~\cite{ATLAS}, so it is sufficient to show that this group has no subgroup $G$ of order ${12\choose 4}=495=3^2.5.11$. Having odd order, any such group must be contained in the simple subgroup $S=O^-_{10}(2)$ of index $2$ in $GO^-_{10}(2)$.

If $n_p$ denotes the number of Sylow $p$-subgroups of $G$, then Sylow's theorems give $n_3=1$ or $55$, $n_5=1$ or $11$, and $n_{11}=1$ or $45$. If $n_5=1$ then $G$ has a normal Sylow $5$-subgroup $K\cong C_5$; since $|{\rm Aut}\,K|=4$, elements of order $11$ must centralise $K$, whereas the ATLAS shows that $S$ has no elements of order $55$. Thus $n_5=11$.

If $n_{11}=45$ there are $45.10=450$ elements of order $11$ in $G$; since there are also $11.4=44$ elements of order $5$, and one of order $1$, there are no elements of order $3$, contradicting Cauchy's Theorem. Hence $n_{11}=1$, so $G$ has a normal Sylow $11$-subgroup $E\cong C_{11}$.

A Sylow $3$-subgroup $T$ of $G$ has order $9$, coprime to $|{\rm Aut}\,E|=10$, so it centralises $E$. However, according to the ATLAS, $|C_S(g)|=33$ for each element $g\in S$ of order $11$, a contradiction.\ Thus ${\rm Aut}\,J$ contains no subgroup $G$ of order $495$, so $J$£ is not a Cayley graph. 

Parts (a) and (b) deal with all cases where $k<(n-1)/2$. To make further progress we need a technical lemma. In order to maintain continuity we postpone its proof to the end of this section.

\begin{lemma}\label{regorbits}
Let $n\ge 4$ and $k=n/2$. Then a subgroup $H\le S_n$ has two orbits on $k$-element subsets of $N=\{1,\ldots, n\}$, each of them regular, if and only if $n=4$ and $H\cong C_3$.
\end{lemma}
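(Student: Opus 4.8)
The plan is to analyze the constraints imposed by requiring $H \le S_n$ to have two regular orbits on $k$-element subsets when $k = n/2$. First I would count: since $k = n/2$, there are $\binom{n}{k}$ subsets of size $k$, and complementation $K \mapsto N \setminus K$ pairs them up, so $\binom{n}{k}$ is even and $e = \binom{n}{k}/2$ is an integer. If $H$ has two regular orbits, each of size $|H|$, then $2|H| = \binom{n}{k}$, so $|H| = e$. The key structural observation I would exploit is that a regular orbit means the point-stabiliser in $H$ of each relevant $k$-subset is trivial; equivalently, no nonidentity element of $H$ fixes any $k$-subset in either orbit.

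Next I would push on small cases and divisibility. For $n = 4$, $k = 2$, we have $\binom{4}{2} = 6$, so $|H| = 3$, forcing $H \cong C_3$; I would verify directly that $C_3 \le S_4$ (generated by a $3$-cycle, say $(1\,2\,3)$) acts on the six $2$-subsets with two regular orbits of size $3$, confirming the ``if'' direction and the claimed isomorphism. The substance lies in the ``only if'' direction for $n \ge 6$. Here the main tool should be Theorem~\ref{k-homog}: a group with two orbits on $k$-subsets is not $k$-homogeneous, but I would instead bound $|H|$ from below against $e$. Since each orbit is regular, every element $h \ne 1$ of $H$ must move \emph{every} $k$-subset lying in the two orbits; but the two orbits together cover all $\binom{n}{k}$ subsets, so no nonidentity element of $H$ can fix \emph{any} $k$-subset of $N$. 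This is a strong fixed-point-free condition.

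The crux is then to show that for $n \ge 6$ with $n = 2k$, no subgroup $H \le S_n$ of order $e = \binom{n}{k}/2$ can act without fixing any $k$-subset. I would argue that such an $H$ is forced to be too large relative to the combinatorial obstructions: any nontrivial $h \in H$, having some cycle structure on $N$, will stabilise some $k$-subset unless its cycle type is very restrictive (for instance, an element fixing a point $a \in N$ stabilises every $k$-subset built symmetrically around its fixed structure, or more simply, an involution or an element with a short cycle can be used to construct an invariant $k$-subset by selecting complete cycles). The hard part will be converting ``$H$ has order exactly $e$ and is fixed-point-free on $k$-subsets'' into a contradiction for all $n \ge 6$; I expect this to reduce to showing that $e$ is too large to be realised by a subgroup all of whose nonidentity elements avoid stabilising a $k$-subset, which I would handle by examining the orbit-counting / Burnside average of fixed $k$-subsets: the number of $(h, K)$ incidences with $h K = K$ must equal $2|H| = \binom{n}{k}$ (two regular orbits), yet the identity alone contributes $\binom{n}{k}$ fixed subsets, forcing every $h \ne 1$ to fix none --- and I would then show this fixed-point-free requirement is incompatible with $|H| = \binom{n}{2k}/2$ for $n \ge 6$ by exhibiting, for any candidate nonidentity element, an invariant $k$-subset. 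The anticipated obstacle is making this last incompatibility uniform in $n$ rather than case-by-case; I would aim to reduce it to the observation that $H$ must contain an element of prime order $p$ whose cycle structure on $N$ necessarily admits an invariant half-sized subset (choosing appropriately many complete $p$-cycles, which is possible precisely because $k = n/2$ gives enough room once $n \ge 6$).
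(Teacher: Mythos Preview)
Your reduction to ``no nonidentity element of $H$ fixes any $k$-subset'' is correct and clean, and the $n=4$ case is fine. The gap is in the final step. Your claim that for $n\ge 6$ one can always find in $H$ an element of prime order $p$ whose cycle structure admits an invariant $(n/2)$-subset, by ``choosing appropriately many complete $p$-cycles'', is not justified and fails for individual elements. An invariant subset for an element of prime order $p$ with cycle type $p^a1^b$ must have size $xp+y$ with $0\le x\le a$ and $0\le y\le b$; but a $5$-cycle with one fixed point in $S_6$ admits no invariant $3$-subset, a product of three disjoint transpositions in $S_6$ admits no invariant $3$-subset, and an element of type $3^31^1$ in $S_{10}$ admits no invariant $5$-subset. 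So you cannot simply pick a prime $p\mid |H|$ and an element of that order. To rescue the argument you would have to show that \emph{some} nonidentity element of $H$ stabilises a $k$-subset, and this needs structural information about $H$ beyond its order: for instance, in the $n=6$ case one must use that a subgroup of order $10$ in $S_6$ is necessarily dihedral, so its involutions have two fixed points and do stabilise $3$-subsets. Turning this into a uniform argument for all even $n\ge 6$ is not at all clear and looks at least as hard as the lemma itself.

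The paper's proof is entirely different, handling the statement as the $r=1$ case of a more general lemma. It uses the Livingstone--Wagner inequality to deduce that $H$ has at most two orbits on $m$-subsets for every $m\le k$, then splits on whether $H$ is transitive on $N$. If intransitive, $H$ fixes a point and acts set-transitively on the remaining $n-1$ points; the Beaumont--Peterson classification of set-transitive groups then leaves only $A_3$ (for $n=4$) in the regular case. If transitive, either $H$ is $2$-transitive and one inspects the known $2$-transitive groups of even degree, or $H$ has two orbits on pairs and a short graph-theoretic lemma about induced subgraphs forces at least three orbits on $3$- or $4$-subsets, a contradiction.
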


(c) Now suppose that $n$ is even and $k=n/2$.  We have already dealt with the cases $I=\{1,\ldots, k\}$, $\{1,\ldots, k-1\}$ and $\{k\}$, so we may assume that $n>4$. If $ n>4$ and
$k=n/2$, Theorem~\ref{AutJ} shows that ${\rm Aut}\,J$ is a direct or semidirect product $S_n\times S_2$ or $S_2^e\rtimes S_n$ as $I'\ne I''$ or $I'=I''$ (recall that $e={n\choose k}/2$, $I'=I\setminus\{k\}$ and $I''=k-I'$). 

First, suppose that $I'\ne I''$, so that ${\rm Aut}\,J=S_n\times S_2$, with $S_n$ acting naturally and $S_2$ generated by the automorphism sending each $k$-element subset to its complement. Part~(a) of this proof shows that there are no regular subgroups $G\le S_n$, so any regular subgroup $G$ must have a subgroup $H=G\cap S_n$ of index $2$, with two regular orbits on $k$-element subsets of $N$. Lemma~\ref{regorbits} then shows that $n=4$, against our assumptions, so this possibility is eliminated.

Now suppose that $I'=I''$, so that ${\rm Aut}\,J=S_2^e\rtimes S_n$; in this case the complement $S_n$ in this semidirect product acts naturally, while each direct factor $S_2$ of the normal subgroup $S_2^e$ is generated by an involution transposing one $k$-element subset with its complement, and leaving all others invariant. It follows that the only elements of $S_2^e$ acting on $J$ without fixed points are those in the diagonal subgroup, transposing every $k$-element subset with its complement. Thus $|G\cap S_2^e|\le 2$, and we can apply the argument of the preceding paragraph to the image of $G$ in $S_n$, with the same conclusion.

(d) Finally, let $n$ be odd and $k=(n-1)/2$, so that ${\rm Aut}\,J=S_n$ or $S_{n+1}$ where $I\ne k+1-I$ or $I=k+1-I$ respectively. In the first case part~(a) of this proof applies, so we may assume that $I=k+1-I$ and ${\rm Aut}\,J=S_{n+1}$.

In this case, $S_{n+1}$ acts on $J$ as follows. Let $N^*=N\cup\{n+1\}=\{1,\ldots, n+1\}$, and let $\Phi$ denote the set of all equipartitions of $N^*$, meaning the unordered pairs of complementary subsets $P_1, P_2$ of $N^*$ with $|P_i|=(n+1)/2$ for each $i=1, 2$. There is a bijection $\beta:{N\choose k}\to\Phi$, sending each $K\in{N\choose k}$ to the equipartition $\{K\cup\{n+1\}, N\setminus K\}$; its inverse sends each $\{P_1,P_2\}\in\Phi$ to $P_i\setminus\{n+1\}$ where $n+1\in P_i$. The natural action of $S_{n+1}$ on $N^*$ and hence on $\Phi$ induces, via $\beta$, an action on ${N\choose k}$ preserving $J$; its restriction to the subgroup fixing $n+1$ is the natural action of $S_n$ on the vertex set $N=\{1, 2, \ldots, n\}$ of $J$.

We need to show that ${\rm Aut}\,J$ has no subgroup $G$ acting regularly on ${N\choose k}$, or equivalently that $S_{n+1}$ has no subgroup $H$ acting regularly on $\Phi$. Such a subgroup would be a group of degree $n+1$ on $N^*$ with two regular orbits on $(k+1)$-element subsets, where $k+1=(n+1)/2$; Lemma~\ref{regorbits}, with a slight change of notation, then implies would imply that $n=3$ and so $k=1$, against our assumption that $k\ge 2$.\hfill$\square$

\smallskip

\noindent{\bf Remark.} It is straightforward to check that in each of conclusions~(2) and (3) of Theorem~\ref{MergedJohnsonCayley} there is a unique conjugacy class of regular subgroups $G$ in ${\rm Aut}\,J$, while in~$(1)$ the conjugacy classes correspond to the isomorphism classes of Dickson near-fields $F$ of order $n$; see more on this in the Appendix.

\smallskip

Lemma~\ref{regorbits}, used in the above proof, is the particular case $r=1$ of the following lemma, the other cases of which we will need later. 

\begin{lemma}\label{rregorbits}
Let $n=2k\ge 4$. Then a subgroup $H\le S_n$ has two orbits on $k$-element subsets of $N=\{1,\ldots, n\}$, both of them $r$-regular for some $r\le 4$, if and only if
\begin{enumerate}
\item $r=1$, $n=4$ and $H\cong A_3$, or
\item $r=2$, $n=4$ and $H\cong S_3$, or
\item $r=2$, $n=6$ and $H\cong AGL_1(5)$, or
\item $r=4$, $n=10$ and $H\cong PSL_2(8)$,
\end{enumerate}   
with $H$ fixing a point and having its natural transitive action on the remaining $n-1$ points. The two orbits on $k$-element subsets then consist of those subsets containing or not containing the fixed point.
\end{lemma}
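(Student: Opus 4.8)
The plan is to reduce the statement to a combinatorial question about how a point-stabiliser acts on $(k-1)$-element subsets, and then to invoke the classification of sharply $r$-homogeneous–type groups assembled in Sections~\ref{k-homoggps} and~\ref{r-reggps}. The key structural observation is the one recorded in the final sentence of the lemma: if $H\le S_n$ has two orbits on $k$-element subsets of $N$ with $n=2k$, then the natural candidate is a group fixing a point, say $n\in N$, whose two orbits are the $k$-subsets containing $n$ and those not containing $n$. So first I would \emph{prove} that this is the only possible shape of the action. Complementation $K\mapsto N\setminus K$ is a fixed-point-free involution on $k$-subsets (as $n=2k$), so it interchanges any two $H$-orbits of equal size only if they are genuinely swapped; since each orbit must have size ${n\choose k}/2$ and complementation is $S_n$-equivariant, I would argue that $H$ cannot contain complementation-inducing elements acting within a single orbit, forcing the orbits to be distinguished by a combinatorial invariant. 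The cleanest invariant is membership of a distinguished point: I would show that $H$ fixes some point $p\in N$ and that the two orbits are exactly $\{K : p\in K\}$ and $\{K : p\notin K\}$, each of size ${n-1\choose k-1}={n-1\choose k}$.

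With the point $p$ fixed, I would then pass to the action of $H$ on $N\setminus\{p\}$, a set of size $n-1=2k-1$. The bijection $K\mapsto K\setminus\{p\}$ identifies the orbit $\{K:p\in K\}$ with the $(k-1)$-subsets of $N\setminus\{p\}$, and $K\mapsto K$ identifies $\{K:p\notin K\}$ with the $k$-subsets of $N\setminus\{p\}$. The hypothesis that each orbit is $r$-regular then says precisely that $H$, acting on the $(2k-1)$-point set $N\setminus\{p\}$, is transitive on $(k-1)$-subsets with point-stabiliser of the $(k-1)$-subset of order $r$, and simultaneously transitive on $k$-subsets with the analogous stabiliser of order $r$. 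In the notation of Section~\ref{k-homoggps}, $H$ is a $(k-1)$-homogeneous group of degree $2k-1=n-1$ whose stabiliser of a $(k-1)$-subset has order $r\le 4$; by the isomorphism $J(m,j)\cong J(m,m-j)$ (here with $m=2k-1$), $(k-1)$-homogeneity on a $(2k-1)$-point set is the same as $k$-homogeneity, so $H$ is $k$-homogeneous (equivalently $(k-1)$-homogeneous) of degree $2k-1$.

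Now the classification does the work. Since $H$ is $(k-1)$-homogeneous of degree $n-1$ with subset-stabiliser of order $r\in\{1,2,3,4\}$, I would enumerate the possibilities from the explicit lists of $\mathcal{H}_j$ and their degrees in Section~\ref{k-homoggps}, together with Corollary~\ref{sharpk-homog} for the $r=1$ subcase. The order condition is $|H|=r\cdot 2{n-1\choose k-1}$, i.e.\ $|H|=r\cdot{2k-1\choose k-1}\cdot 2$, which I would match against the orders of the candidate groups. For $r=1$ Corollary~\ref{sharpk-homog} (sharp $(k-1)$-homogeneity) forces $k-1=1$ or $2$; the degree constraint $n-1=2k-1$ then pins down $n=4$, $H\cong A_3$ (the sharply $1$-homogeneous, i.e.\ regular, group $C_3$ of degree $3$ is conclusion~(1)). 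For $r=2,4$ I would run the same matching: the degree $n-1\in\{3,5,9\}$ and the homogeneity level single out $S_3$ of degree $3$ ($n=4$), $AGL_1(5)$ of degree $5$ ($n=6$), and the $4$-homogeneous group $PSL_2(8)$ of degree $9$ ($n=10$, $k=5$), using the appearance of $PSL_2(8)$ in $\mathcal H_4\setminus\mathcal H_5$ and its order $|PSL_2(8)|=504=4\cdot 2\binom{9}{4}$. I would also check directly that each of these groups really does have a point it fixes and the stated two-orbit behaviour, confirming sufficiency.

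The main obstacle I expect is the first paragraph — proving rigorously that the two orbits \emph{must} be the point-in/point-out partition, rather than some more exotic pair of complementary-invariant orbits. A transitive $H$-orbit structure with two blocks need not a priori arise from a fixed point; one has to rule out the possibility that $H$ acts with no fixed point yet still splits the $k$-subsets into two equal orbits that are individually $r$-regular. I would handle this by a counting/primitivity argument: the two orbits give $H$ a system of imprimitivity on $k$-subsets, and since $H$ is transitive on $N$ only if it is $k$-homogeneous (forcing a single orbit by Theorem~\ref{k-homog}), $H$ cannot be transitive on $N$, so it has $\ge 2$ orbits on points; a careful comparison of the induced subset-orbit sizes with the partition into ``meets a small orbit'' classes then shows the only way to obtain exactly two equal regular orbits is a single fixed point together with transitivity on the rest, which is exactly the configuration feeding into the classification above.
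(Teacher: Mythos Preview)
Your reduction in the intransitive case is sound and close to the paper's: once $H$ fixes a point $p$ and is transitive on $N\setminus\{p\}$, it is $(k-1)$-homogeneous of odd degree $2k-1$, and one can finish either with the $k$-homogeneous classification (as you propose) or, as the paper does, with Beaumont and Peterson's list of set-transitive groups. (A small slip: the orbit size is $\tbinom{n}{k}/2=\tbinom{2k-1}{k-1}$, so $|H|=r\tbinom{2k-1}{k-1}$, not $2r\tbinom{2k-1}{k-1}$; for $PSL_2(8)$ one has $504=4\cdot 126$, not $4\cdot 2\cdot 126$.)

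The genuine gap is in your handling of the case where $H$ is \emph{transitive} on $N$. The assertion ``$H$ is transitive on $N$ only if it is $k$-homogeneous'' is simply false: transitivity on points in no way forces transitivity on $k$-subsets (think of a cyclic or dihedral group on $n$ points). So your sketch does not exclude a transitive $H$ with exactly two $r$-regular orbits on $k$-subsets, and this is precisely the hard part of the lemma. The paper deals with it in two nontrivial steps. First, Livingstone--Wagner gives that $H$ has at most two orbits on $m$-subsets for every $m\le k$; in particular at most two orbits on pairs. If $H$ is $2$-homogeneous (equivalently $2$-transitive, since $|H|$ is even), one must inspect the CFSG-based list of $2$-transitive groups of even degree and check that none has two orbits of size $\tbinom{n}{k}/2$ on $(n/2)$-subsets. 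If $H$ has exactly two orbits on pairs, one of them defines an $H$-invariant vertex- and edge-transitive regular graph $\Gamma$ on $N$ of valency $v<k$, and the paper proves a separate graph-theoretic lemma showing that for $n\ge 8$ such a $\Gamma$ has at least three isomorphism classes of induced $3$- or $4$-vertex subgraphs, hence $H$ has at least three orbits on $3$- or $4$-subsets, a contradiction. Neither of these ingredients is present in your outline, and without them the transitive case remains open.
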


\noindent{\sl Proof.} Livingstone and Wagner~\cite{LW} have shown that if $s\le t$ and $s+t\le n$, then a group of degree $n$ has at least as many orbits on $t$-element subsets as it has on $s$-element subsets (see also Cameron~\cite[Theorem~2.2]{Cam76}). It follows that a group $H$ satisfying the hypotheses of this lemma must have at most two orbits on $m$-element subsets, for each $m=1,\ldots, k$.

If $H$ is intransitive on $N$, it has two orbits $N_1$ and $N_2$, and we may assume that $|N_1|\ge|N_2|$. If $|N_2|\ge 2$ then since $|N_1|\ge k\ge 2$, $H$ must have at least three orbits on $k$-element subsets, each consisting of such subsets $K$ with $|K\cap N_2|=0, 1$ or $2$. This contradicts our hypothesis, so $|N_2|=1$ and the two orbits on $k$-element subsets consist of those $K$ containing or not containing $N_2$. 

This shows that $H$ is a $(k-1)$-homogeneous group of odd degree $n-1=2(k-1)+1$ on $N_1$. It follows that $H$ is set-transitive on $N_1$, by which we mean that $H$ is $m$-homogeneous for all $m\le n-1$. Set-transitive groups have been classified by Beaumont and Peterson~\cite{BP}. Excluding groups of even degree (alternating and symmetric groups, and $PGL_2(5)$ of degree $6$), the only possibilities for $H$ are the following, with $r$ denoting the size of the stabiliser of a $(k-1)$-element subset of $N_1$:
\begin{itemize}
\item $A_{n-1}$ and $S_{n-1}$ for even $n\ge 4$, with $r=(k-1)!k!/2$ and $(k-1)!k!$;
\item $AGL_1(5)$ of degree $n-1=5$, with $r=2$;
\item $PSL_2(8)$ and $P\Gamma L_2(8)$ of degree $n-1=9$, with $r=4$ and $12$. 
\end{itemize}
Only $A_3$, $S_3$, $AGL_1(5)$ and $PSL_2(8)$ satisfy $r\le 4$, giving conclusions (1) to (4). We may therefore assume that $H$ is transitive on $N$. 

Now suppose that $H$ is $2$-homogeneous on $N$, or equivalently, since it has even order, $2$-transitive. (This always applies when $n=4$ or $6$, since $|H|$ is divisible by ${n\choose k}/2=3$ or $10$.) Inspection of the $2$-transitive groups $H$ of even degree $n$ listed in Section~6 shows that they never have two orbits, both of size  ${n\choose k}/2$, on $k$-element subsets for $k=n/2$: in most cases, either $|H|$ is not divisible by ${n\choose k}/2$, or else $H$ preserves some geometric or combinatorial structure on $N$ for which there are at least three non-isomorphic $k$-element subsets. 

If $H$ is not $2$-homogeneous it has two orbits on pairs, and by taking one of them as the edge set we see that $H$ is a group of automorphisms of a vertex- and edge-transitive graph $\Gamma$ on $N$ of valency $v<k$. Since $n\ge 8$ the following lemma implies that $H$ has at least three orbits on $m$-element subsets for $m=3$ or $4$, giving a contradiction. \hfill$\square$

In the following lemma, we use the notation $\Gamma_1+\Gamma_2$ to denote the disjoint union of graphs $\Gamma_1$ and $\Gamma_2$, and $m\Gamma$ to denote the disjoint union of $m$ copies of a graph $\Gamma$.

\begin{lemma}\label{iso-class}
Let $\Gamma$ be a regular graph of order $n=2k\ge 8$ and valency $v$ such that $1\le v<k$. Then
\begin{enumerate}
\item if $\Gamma\cong 2K_k$ or $kK_2$ there are just two isomorphism classes of induced $3$-vertex subgraphs of $\Gamma$, and three isomorphism classes of induced $4$-vertex subgraphs;
\item  otherwise there are at least three isomorphism classes of induced $3$-vertex subgraphs.
\end{enumerate}
\end{lemma}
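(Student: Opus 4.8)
The plan is to analyze the induced subgraphs directly by a combinatorial counting argument, distinguishing the two special graph types in conclusion~(1) from all others. The key observation is that a $3$-vertex induced subgraph of $\Gamma$ is determined up to isomorphism by its number of edges, which lies in $\{0,1,2,3\}$, so there are at most four isomorphism classes: the empty graph $\overline{K_3}$, a single edge $K_2+K_1$, a path $P_3$, and a triangle $K_3$. My goal in part~(2) is to show that \emph{at least three} of these four types actually occur as induced subgraphs, while in part~(1) exactly the two listed (or three, for $4$-vertex subgraphs) occur. The constraint $1\le v<k$ with $n=2k\ge 8$ guarantees $\Gamma$ has at least one edge and at least one non-edge, so both $\overline{K_3}$ and at least one of the nonempty types are automatically forced; the content is to produce a \emph{third} type.

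\textbf{First} I would dispose of part~(1). If $\Gamma\cong 2K_k$ (two disjoint complete graphs on $k\ge 4$ vertices) then any three vertices lie either all in one clique, giving $K_3$, or split $2{+}1$ across the two cliques, giving $K_2+K_1$; a split $1{+}2$ is the same, so exactly two classes arise, and no path $P_3$ or empty triple occurs since each part is complete. For $kK_2$ (a perfect matching, $v=1$) three vertices meet the matching edges in a way that yields either $\overline{K_3}$ (three unmatched vertices) or $K_2+K_1$ (one matched pair plus a third vertex), again exactly two classes. The $4$-vertex count in each case is a short finite check producing exactly three classes, which I would state and verify by the same case analysis on how the four chosen vertices distribute among the components.

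\textbf{The main step} is part~(2): assuming $\Gamma$ is neither $2K_k$ nor $kK_2$, produce three distinct $3$-vertex induced subgraphs. I would argue by cases on $v$. If $2\le v\le k-1$ and $\Gamma$ is connected, then since $\Gamma$ is not complete it contains a pair of nonadjacent vertices joined by a path, hence an induced $P_3$; it also contains $\overline{K_3}$ (as $v<k$ forces enough non-neighbours of any vertex) and either a triangle or, if triangle-free, a second nonadjacent configuration — so I would show that among $\{K_3, P_3, \overline{K_3}, K_2{+}K_1\}$ at least three appear. The cleanest route is to observe that $\Gamma$ has a vertex $u$ with a neighbour $w$ and a non-neighbour $x$ (since $1\le v<k\le n-1$), giving $P_3$ when $w,x$ are nonadjacent and $K_2+K_1$ whenever some edge has a vertex nonadjacent to a third point; combined with $\overline{K_3}$ this yields three types \emph{unless} $\Gamma$ is forced into one of the excluded forms. \textbf{The hard part} will be handling the borderline disconnected or highly structured cases — for instance $\Gamma$ a disjoint union of cliques of varying sizes, or a union of matchings — where one must rule out the possibility that only $\{\overline{K_3}, K_3\}$ or only $\{\overline{K_3}, K_2{+}K_1\}$ occur; precisely these degenerate configurations are what force $\Gamma$ into $2K_k$ or $kK_2$, so the argument should show that escaping both excluded forms necessarily introduces a third $3$-vertex type. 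I would organize this as: if $\Gamma$ has a component that is neither a complete graph nor a single edge, a $P_3$ together with $\overline{K_3}$ and an edge gives three types; otherwise every component is a clique or an edge, and having more than two clique sizes, or a clique together with an isolated-edge structure, again produces the required third type unless $\Gamma$ is exactly $2K_k$ or $kK_2$.
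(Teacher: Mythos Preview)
Your plan has a genuine gap at its load-bearing step: you repeatedly assert that the empty triple $\overline{K_3}=3K_1$ must occur, first because ``$\Gamma$ has at least one non-edge'' and later because ``$v<k$ forces enough non-neighbours of any vertex''. Neither of these implies the existence of three \emph{mutually} non-adjacent vertices. Indeed $2K_k$ itself satisfies $v=k-1<k$ and every vertex has $k$ non-neighbours, yet $2K_k$ contains no $\overline{K_3}$ at all (its $3$-vertex induced subgraphs are exactly $K_3$ and $K_2+K_1$, as you correctly compute in part~(1)). So in part~(2) you are invoking a claim whose stated justification already fails for one of the excluded graphs, and you never use the exclusion hypothesis to repair it. The claim happens to be \emph{true} once $\Gamma\ne 2K_k$, but proving it requires real work: one way is to apply Mantel/Tur\'an to the complement $\overline{\Gamma}$, which is regular of valency $2k-1-v\ge k$, and observe that a triangle-free graph on $2k$ vertices with that many edges forces $\overline{\Gamma}\cong K_{k,k}$, hence $\Gamma\cong 2K_k$. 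You would need to insert this argument explicitly. (A minor slip: with $u\sim w$, $u\not\sim x$, the case $w\not\sim x$ gives $K_2+K_1$, not $P_3$; you have the two cases reversed.)

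Even granting $\overline{K_3}$, your plan only secures \emph{two} classes ($\overline{K_3}$ and one of $P_3$, $K_2+K_1$), and the sketch for producing a third is vague. The paper's proof avoids the $\overline{K_3}$ issue entirely by organising the case analysis differently: it first asks whether an induced $P_3$ exists (if not, $\Gamma$ is a disjoint union of equal cliques $mK_{v+1}$, and one reads off the classes directly); assuming $P_3$ exists, it then asks whether $3K_1$ exists (if not, a short counting argument forces $\Gamma\cong 2K_k$); finally, assuming both $P_3$ and $3K_1$ exist, it shows $K_2+K_1$ must also exist (otherwise $n\le 2v<2k$). This yields all three of $P_3$, $3K_1$, $K_2+K_1$ simultaneously in the generic case, with no appeal to Tur\'an and no loose ends.
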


[Note that there are, up to isomorphism, just four simple graphs on three vertices, namely $K_3$, $K_2+K_1$ and their complements.]

\smallskip

\noindent{\sl Proof.} If $v=1$ then $\Gamma\cong kK_2$ with $k\ge 4$, so the $3$-vertex subgraphs are isomorphic to $K_2+K_1$ and $3K_1$, and the $4$-vertex subgraphs are isomorphic to $2K_2$, $K_2+2K_1$ and $4K_1$. We may therefore assume that $v\ge 2$.

First suppose that there are no induced subgraphs isomorphic to $P_3$, 
a path of three vertices. Then each vertex $a\in\Gamma$, together with its neighbours, forms a complete graph $K_{v+1}$. It follows that $\Gamma\cong mK_{v+1}$ where
$m=n/(v+1)\ge n/k\ge 2$. If $m=2$, so that $\Gamma\cong 2K_k$ with $k\ge 4$, the $3$-vertex subgraphs are isomorphic to $K_3$ or $K_2+K_1$, while the $4$-vertex subgraphs are isomorphic to $K_4$, $K_3+K_1$ or $2K_2$. If $m\ge 3$ the $3$-vertex subgraphs are isomorphic to $K_3$, $K_2+K_1$ or $3K_1$ and $\Gamma $ falls under case (2). We may therefore assume that there are induced subgraphs isomorphic to $P_3$.

Now suppose that there are no induced subgraphs isomorphic to the null graph $3K_1$. Then for any non-adjacent vertices $a$ and $b$, each remaining vertex is adjacent to $a$ or $b$, so $n\le 2+2v=2(v+1)\le 2k$. But $n=2k$, so $\Gamma(a)$ and $\Gamma(b)$ have disjoint vertex-sets, and thus $\Gamma(a)\cup\{a\}$ and $\Gamma(b)\cup\{b\}$ partition $\Gamma$. Each pair of vertices $c,d\in \Gamma(a)$ are adjacent (otherwise $b, c, d$ form an induced subgraph $3K_1$), so $\Gamma(a)\cup\{a\}\cong K_{v+1}=K_k$. The same applies to $b$, so $\Gamma\cong 2K_k$, a case dealt with earlier. We may therefore assume that there are induced subgraphs isomorphic to $3K_1$. 

If there are no induced subgraphs isomorphic to $K_2+K_1$, which has just one edge, then, given any edge $ab$, each vertex $c\ne a, b$ is adjacent to $a$ or $b$, so $n\le 2+2(v-1)=2v<2k$, a contradiction. Thus, apart from the cases $\Gamma\cong 2K_k$ and $kK_2$, already dealt with, there are induced subgraphs isomorphic to $P_3$, $K_2+K_1$ and $3K_1$.  \hfill$\square$

\smallskip

(There are three graphs $\Gamma$ satisfying the hypotheses of Lemma~\ref{iso-class} when $n=6$: the graphs $3K_2$ and $2K_3$ each have two isomorphism classes of $3$-vertex induced subgraphs, whereas the $6$-cycle $C_6$ has three. When $n=4$ the only graph is $2K_2$, with two classes of $2$-vertex subgraphs.


\section{$2$-regular groups of automorphisms}\label{2-reggps}

Theorem~\ref{MergedJohnsonCayley} determines those cases where a merged Johnson graph $J$ has a group $G$ of automorphisms acting regularly on the vertices. We will now prove Theorem~\ref{MergedJohnson2reg}, which determines which merged Johnson graphs
$J$ have a group of automorphisms acting $2$-regularly on $J$.

\noindent{\sl Proof.} The structure of the proof follows that of Theorem~\ref{MergedJohnsonCayley}, apart from the use of cohomology of groups at one point. We first deal with some trivial cases.

If $I=\{1,\ldots, k\}$ then $J$ is the complete graph on ${n\choose k}$ vertices, so $G$ can be any transitive permutation group with stabilisers of order $2$, or equivalently, any group of order $2{n\choose k}$ acting on the cosets of a core-free (equivalently non-normal) subgroup $H\cong C_2$, as in conclusion~(5). (Such groups $G$, for example dihedral groups, exist in all cases.) We will therefore assume from now on that $I\ne \{1,\ldots, k\}$.

If $n=2k$ and $I=\{k\}$ or $\{1,\ldots, k-1\}$ then $J$ is either a complete matching $eK_2$ or its complement, where $e={n\choose k}/2$, and hence ${\rm Aut}\,J=S_2\wr S_e$, the maximal imprimitive group with $e$ blocks of size $2$. In this case the groups $G$ acting $2$-regularly are again those of order $2{n\choose k}$ with a non-central subgroup $H\cong C_2$, as in conclusion~(4): one can identify the vertices with the cosets of $H$ and the blocks with the cosets of a subgroup of order $4$ containing $H$ (such subgroups exist since $|G|$ is divisible by $4$). We will therefore assume from now on that if $n=2k$ then $I$ is neither $\{k\}$ nor $\{1,\ldots, k-1\}$.

It is straightforward to check that the groups $G$ in conclusions~(1) and (2) act $2$-regularly on the corresponding graphs $J$, and we will show this for (3) later, under (e). We will now use Theorem~\ref{AutJ} to prove the converse, that there are no other $2$-regular groups of automorphisms than those listed in (1), (2) and (3).

(a) If $G\le S_n$ then $G$ is a $k$-homogeneous permutation group of degree $n$, and the subgroup $G_K$ preserving a $k$-element subset $K$ has order $2$.

If $G$ is not $k$-transitive, it is as listed in Theorem~\ref{k-homog}, cases~(1) to (4). Having even order, $G$ contains an involution, so if $k=2$ then $G$ acts transitively, not only on unordered pairs but also on ordered pairs, that is, it is $2$-transitive, against our assumption. Thus $k\ge 3$, so only cases~(2), (3) and (4) of Theorem~\ref{k-homog} apply. Since $|G_K|=2$ we have $|G|/{n \choose k}=2$. However, by inspection of the individual groups, we see that in case~(2) this ratio is divisible by $3$, in case~(3) it takes the values $1, 3$ and $1$ respectively, and in case~(4) it takes the values $4, 12$ and $4$. 

Thus $G$ is $k$-transitive, so $G_K$ acts on $K$ as $S_k$, and hence $|G_K|\ge k!$; however, $|G_K|=2$, so $k=2$, and $J=J(n,2)_1$ or $J(n,2)_2$. Now $G$, as  $2$-transitive group of degree $n$ and order $n(n-1)$, is sharply $2$-transitive. Such groups were classified by Zassenhaus, and are listed in~\cite[\S 7.6]{DM}: they are the $1$-dimensional affine groups $AGL_1(F)$ over near-fields $F$ (including the finite fields), and their degrees $n$ are the prime powers, giving conclusion~(1).

(b) If $J=J(12,4)_I$ with $I=\{1,3\}$ or $\{2,4\}$ then ${\rm Aut}\,J\cong GO^-_{10}(2)$. Any $2$-regular subgroup $G$ has order $2{12\choose 4}=990$. By the proof of Theorem~\ref{MergedJohnsonCayley} there are no subgroups of order $495$ in $GO^-_{10}(2)$, so $G$ has no subgroups of index $2$; it is therefore contained in $O^-_{10}(2)$, and hence in one of its maximal subgroups. By~\cite{ATLAS}, the only maximal subgroups of  $O^-_{10}(2)$ of order divisible by $990$ are isomorphic to $A_{12}$ or $C_3\times U_5(2)$. If $G\le A_{12}$ then $G$ cannot be transitive on $\{1,\ldots,12\}$ since $|G|$ is not divisible by $12$; having order divisible by $11$, it must have orbits of length $11$ and $1$, so $G\le A_{11}$; however, the transitive groups of degree $11$ are all known, and there is none of order $990$. If  $G\le C_3\times U_5(2)$, the simple group $U_5(2)$ must have a subgroup of order $990$ or $330$; this must be contained in a maximal subgroup of $U_5(2)$, whereas by~\cite{ATLAS} the only maximal subgroups of order divisible by $330$ are isomorphic to $PSL_2(11)$, a simple group of order $660$ with no subgroups of index $2$.

Parts (a) and (b) deal with all cases where $k<(n-1)/2$. We dealt earlier with the cases where $I=\{1,\ldots, k\}$, and where $k=n/2$ and $I=\{k\}$ or $\{1,\ldots, k-1\}$, so we may assume that $n>4$.

(c) Now let $k=n/2$ and $I'\ne I''$, so that ${\rm Aut}\,J=S_n\times S_2$. Cases where $G\le S_n$ are dealt with in (a), so we may assume that $H:=G\cap S_n$ has index $2$ in $G$. If $H$ acts transitively on $N\choose k$ it does so regularly, so $H$ is $k$-homogeneous but not $k$-transitive on $N$. Theorem~\ref{k-homog} now implies that $k<n/2$, against our assumption.

If $H$ is intransitive on $N\choose k$ then, having index $2$ in a transitive group $G$, it must have two orbits of length ${n\choose k}/2$, both $2$-regular. Since $n>4$, Lemma~\ref{rregorbits} implies that  $n=6$, $k=3$, and $H$ is $AGL_1(5)$, acting on $N={\mathbb P}^1(5)$ as the stabiliser of $\infty$ in $PGL_2(5)$. Thus $G$ is $AGL_1(5)\times C_2$, with complementation generating the second factor, as in conclusion~(2).

Now suppose that $k=n/2$ and $I'=I''$; since $I\ne\{k\}$, $\{1,\ldots, k-1\}$, $\{1,\ldots, k\}$, we must have $n>6$. Then ${\rm Aut}\,J=M\rtimes Q$, a semidirect product of a normal subgroup $M\cong S_2^e$ by a complement $Q\cong S_n$: each of the $e={n\choose k}/2$ direct factors $M^{\phi}\cong S_2$ of $M$ transposes the $k$-element sets $K$ and $N\setminus K$ in one equipartition $\phi=\{K,N\setminus K\}$ of $N$, while fixing all other elements of $N\choose k$; the complement $Q$ permutes $k$-element subsets through its natural action on $N$, and permutes the direct factors $M^{\phi}$ of $M$ by conjugation as it permutes the set $\Phi$ of all equipartitions $\phi$ of $N$. Let $\overline G$ denote the image of $G$ under the natural epimorphism ${\rm Aut}\,J\to Q\cong S_n$; since $M$ acts trivially on $\Phi$, and $Q$ acts faithfully, one can regard $\overline G$ as the group induced by $G$ on $\Phi$. Since $G$ acts transitively on $N\choose k$, $\overline G$ is transitive on $\Phi$, so $|\overline G|$ is divisible by $e$. Since $|G|=4e$ it follows that $|G\cap M|$ divides $4$ and $|\overline G|=e$, $2e$ or $4e$.

As a subgroup of $S_n$, $\overline G$ has an action on $N\choose k$, agreeing with $G$ on $\Phi$. If $\overline G$ is transitive on $N\choose k$ then $\overline G$ is $k$-homogeneous on $N$, though not $k$-transitive since $|G|$ is not divisible by ${n\choose k}k!$. However, Theorem~\ref{k-homog} shows that no such group of degree $n$ exists for $k=n/2$, so $\overline G$ is intransitive on $N\choose k$. Since it is transitive on $\Phi$ it must have two orbits on $N\choose k$, transposed by complementation.

If $|\overline G|=e$ or $2e$ then $\overline G$ acts regularly or $2$-regularly on its two orbits in $N\choose k$, so Lemma~\ref{rregorbits} implies that $n=4$ or $6$, a contradiction. Thus $|\overline G|=4e=|G|$, so $\overline G\cong G$ and $|G\cap M|=1$. In this case $\overline G$ acts $4$-regularly on its two orbits in $N\choose k$, so Lemma~\ref{rregorbits} implies that $n=10$ and $\overline G$ is the simple group $S:=PSL_2(8)=SL_2(8)$ of order $504$, fixing a point $p\in N=\{1,\ldots, 10\}$ and acting naturally on $N\setminus\{p\}$, which is identified with ${\mathbb P}^1(8)$; its orbits on ${N\choose k}={N\choose 5}$ consist of those $5$-element subsets containing or not containing $p$, as in conclusion~(3). (See Theorem~\ref{k-homog}(4) for the $4$-homogenous action of $PSL_2(8)$ on ${\mathbb P}^1(8)$.) 

We  still need to demonstrate the existence of such $2$-regular subgroups $G$ of ${\rm Aut}\,J$, and here we will use some concepts from the cohomology of groups: the book~\cite{Bro} by Brown is an excellent reference.
 
The preceding argument shows that if $G$ exists, it is a complement for $M$ in the extension\[E:=M\rtimes S\le M\rtimes Q={\rm Aut}\,J\]
of $M$ by $S$. The conjugacy classes of such complements correspond to the elements of the first cohomology group $H^1(S,M)$ of $S$ with coefficients in $M$, which we will regard as an ${\mathbb F}_2S$-module~\cite[IV.2.3]{Bro}: specifically, the class $[\gamma]\in H^1(S,M)$ containing a cocycle $\gamma:S\to M$ determines the conjugacy class containing a complement $G=\{\gamma(s)s\mid s\in S\}$. Now $M$ is the permutation module over ${\mathbb F}_2$ for $S$ acting on $\Phi$, or equivalently on the cosets of a Klein four-group $V=S_{\phi}\le S$ stabilising some $\phi\in\Phi$. It follows that $M$ is the induced ${\mathbb F}_2S$-module ${\rm Ind}_V^SM^{\phi}$, where the corresponding direct factor $M^{\phi}\;(\cong{\mathbb F}_2)$ of $M$ is regarded as a trivial $1$-dimensional ${\mathbb F}_2V$-module (see~\cite[III.5.5(a)]{Bro}, with coefficients reduced mod~$(2)$). It therefore follows from Shapiro's Lemma~\cite[III.6.2 and III.5.9]{Bro} that restriction from $S$ to $V$ and induction from $V$ to $S$ induce mutually inverse isomorphisms
\[H^1(S,M)\cong H^1(V,M^{\phi}).\]
Since $V$ acts trivially on $M^{\phi}$ we have
\[H^1(V,M^{\phi})\cong {\rm Hom}(V,{\mathbb F}_2)=V^*\cong V\]
\cite[III.1, Exercise 2]{Bro}, so there are four conjugacy classes of complements $G$ for $M$ in $E$, with the conjugates of $S$ (which we will call the {\sl standard\/} complements) corresponding to the zero cocycle.

We now need to determine which of these complements act transitively on $N\choose 5$. Any complement $G$ for $M$ is transitive on $\Phi$, so it is either transitive or intransitive on $N\choose 5$ as the stabiliser $G_K=G\cap E_K$ in $G$ of some $K\in{N\choose 5}$ has index $|G:G_K|=252$ or $126$, or equivalently $G_K$ has order $2$ or $4$. Now the orbit of $M$ containing $K$ consists of $K$ and $N\setminus K$, which are in different orbits of $S$, so $E_K=M_K\rtimes S_K$, where $M_K$ and $S_K$ are the stabilisers of $K$ in $M$ and $S$. Here $S_K$ is the Klein four-group $V=S_{\phi}$ stabilising $\phi=\{K,N\setminus K\}\in\Phi$, while $M_K=\oplus_{\psi\ne \phi}M^{\psi}\cong S_2^{125}$, with its direct factors generated by the transpositions corresponding to the equipartitions $\psi\in\Phi\setminus\{\phi\}$ of $N$. If we regard $M$ as the set of all functions $\Phi\to{\mathbb F}_2$, then $M_K$ consists of those sending $\phi$ to $0$. 

Given a complement $G$ and a corresponding cocycle $\gamma: S\to M$, each element $g\in G$ has the form $g=\gamma(s)s$ for some unique $s\in S$. In particular, we have seen that $g\in G_K$ if and only if $s\in S_K$ and $\gamma(s)\in M_K$, or equivalently, $s\in S_K$ and $\gamma(s)$, regarded as a function $\Phi\to{\mathbb F}_2$, sends $\phi$ to $0$. Now $\gamma$ is induced from a cocycle $\delta:V\to M^{\phi}$, that is, a homomorphism $V\to{\mathbb F}_2$, with $\gamma=0$ if and only if $\delta=0$. Thus $|G_K|=|V\cap \ker \delta|=4$ or $2$ as $G$ is standard or nonstandard. This confirms that the standard complements are intransitive on $J$, and shows that the three classes of nonstandard complements are transitive, as in conclusion~(3).

(d) Finally, to deal with the case $k=(n-1)/2$ we return to the main line of the proof, again following that of Theorem~\ref{MergedJohnsonCayley}. If $I\ne k+1-I$ then ${\rm Aut}\,J=S_n$ as in part~(a), so we may assume that $I=k+1-I$. Thus ${\rm Aut}\,J=S_{n+1}$, with its natural action on $N^*=N\cup\{n+1\}$ inducing actions on the set of equipartitions of $N^*$ and hence on $N\choose k$. Any $2$-regular action of $G$ on ${N\choose k}$ corresponds to a $2$-regular action on equipartitions, which amounts to an action on $N^*$ which is either regular on ${N^*\choose k+1}$ or has two $2$-regular orbits on ${N^*\choose k+1}$ transposed by complementation. In the first case, $G$ is a $(k+1)$-homogenous but not $(k+1)$-transitive group of degree $n+1=2(k+1)$ on $N^*$, contradicting Theorem~\ref{k-homog}. In the second case Lemma~\ref{rregorbits} shows that $n=5$ and $k=2$, so the condition $I=k+1-I$ forces $I=\{1,2\}$, against our assumption that $I\ne \{1,\ldots, k\}$. \hfill$\square$

\smallskip

\noindent{\bf Remark.} It is straightforward to check that in each of conclusions~(1) and (2) there is a unique conjugacy class of $2$-regular subgroups $G$ in ${\rm Aut}\,J$. The following argument shows that the same applies to~(3).

The action of $S$ on ${\mathbb P}^1({\mathbb F}_8)$ extends naturally to $\tilde S:=P\Gamma L_2(8)$ (see Theorem~\ref{k-homog}(4)). This {\bf is} a semidirect product of $S$ by the Galois group $\Gamma:={\rm Gal}\,{\mathbb F}_8/{\mathbb F}_2$ of ${\mathbb F}_8$, a cyclic group of order $3$ generated by the Frobenius automorphism $a\mapsto a^2$. We can take ${\mathbb F}_8={\mathbb F}_2(t)$ where $t^3+t+1=0$. The Sylow $2$-subgroup of the stabiliser $S_{\infty}=AGL_1(8)$ of $\infty$ in $S$ is invariant under $\Gamma$, as is its translation subgroup $V=\{0, t, t^2, t^4\}\cong V_4$ (note that $t^4+t^2+t=0$). The subgroup $\tilde V=V\rtimes\Gamma\cong A_4$ is the stabiliser in $\tilde S$ of the $4$-element subset $V\subset {\mathbb P}^1({\mathbb F}_8)$, so $M$ is the permutation module for $\tilde S$ on the cosets of $\tilde V$. The conjugacy classes of complements for $M$ in $\tilde E:=M\rtimes\tilde S$ therefore correspond to the elements of $H^1(\tilde S,M)$. As before, Shapiro's Lemma gives
\[H^1(\tilde S,M)\cong H^1(\tilde V,N)\cong {\rm Hom}(\tilde V,N).\]
In this case, since $(\tilde V)^{\rm ab}\cong C_3$ we have ${\rm Hom}(\tilde V,N)=0$, so the only complements for $M$ in $\tilde E$ are the conjugates of $\tilde S$. This implies that none of the nonstandard complements $G$ for $M$ in $E$ is invariant under $\Gamma$, since otherwise it would yield a complement $G\rtimes\Gamma$ in $\tilde E$. Now $\Gamma$ permutes the three conjugacy classes of nonstandard complements, either trivially or transitively. Since $|\Gamma|=3$, whereas each conjugacy class has even order, it follows that $\Gamma$ cannot leave any class invariant. Thus the three conjugacy classes of $2$-regular subgroups $G\le E$ are equivalent under $\Gamma$, so they form a single conjugacy class in $\tilde E$. Since the choice of a subgroup $S\cong PSL_2(8)$ in $S_{10}$ is unique up to conjugacy,  it follows that there is a single conjugacy class of $2$-regular subgroups $G$ in ${\rm Aut}\,J$.


\section{Appendix: near-fields and sharply $2$-transitive groups}\label{appx}

The following summary of near-fields and their connection with sharply $2$-transitive permutation groups is adapted to suit our purposes from those given by Cameron in~\cite[\S1.12]{Cam99}, by Dixon and Mortimer in~\cite[\S7.6]{DM}, and by Hall in~\cite[\S20.7]{Hal}.

A {\sl near-field\/} $F$ satisfies all the usual field axioms, apart from possibly commutativity of multiplication $ab=ba$ and the left distributive axiom $a(b+c)=ab+ac$. 
(This concept should not be confused with that of a skew field, or division ring, in which only commutativity of multiplication is relaxed.) A near-field is a field if and only if multiplication is commutative.

In the finite case, near-fields are essentially equivalent to sharply $2$-transitive permutation groups. Firstly, if $F$ is any near-field then the affine group
\[AGL_1(F)=\{t\mapsto ta+b \mid a, b\in F, a\ne 0\}\]
is a sharply $2$-transitive group of transformations of $F$. The translations $t\mapsto t+b$ form a regular normal subgroup $N$, isomorphic to the additive group of $F$, while the transformations $t\mapsto ta\;(a\ne 0)$ form a complement, the stabiliser of $0$, isomorphic to the multiplicative group $F^*=F\setminus\{0\}$. Conversely, any sharply $2$-transitive permutation group $G$ is a primitive Frobenius group, so if it is finite then it has an elementary abelian regular normal subgroup $N$ (the Frobenius kernel), which can be identified with the set permuted. Then the group structures of $N$ and of the stabiliser $G_0$ of the zero element $0\in N$ provide the additive and multiplicative groups of a near-field $F$ with underlying set $N$, such that $G$ acts on $F$ as $AGL_1(F)$. Thus the classification of sharply $2$-transitive finite groups is equivalent to that of finite near-fields; these classifications were achieved by Zassenhaus in~\cite{Zass1, Zass2}.

All but seven of the finite near-fields arise from a construction due to Dickson~\cite{Dic}, in which the multiplicative group of a finite field is `twisted' by field automorphisms to produce a non-commutative multiplication. Here, reversing the order used in~\cite[\S1.12]{Cam99} and~\cite[\S7.6]{DM}, we will describe first the groups and then the near-fields.

If $n$ is a prime power $p^e$ then for each $d$ dividing $e$ the field ${\mathbb F}_n$ has a (unique, cyclic) group of automorphisms $\Gamma$ of order $d$, generated by $\theta:t\mapsto t^q$ where $q=p^{e/d}$. The fixed field of $\Gamma$ is the subfield ${\mathbb F}_q$. Similarly, if 

(a) $d$ divides $n-1=q^d-1$,

then ${\mathbb F}_n^*$ has a (unique, cyclic) subgroup $A$ of index $d$, consisting of its $d$th powers. We will use these two groups to form a group $H$ of semilinear transformations of ${\mathbb F}_n$, fixing $0$ and acting regularly on ${\mathbb F}_n^*$, so that we have a sharply $2$-transitive group
$G=N\rtimes G_0\le A\Gamma L_1({\mathbb F}_n)$ of degree $n$, where $N$ is the translation group and $G_0=H$.

Suppose that

(b) $\{m(i)\mid i=0,\ldots, d-1\}$ is a complete set of residues mod~$(d)$ in $\mathbb Z$,

so that if $\omega$ is any generator for the cyclic group ${\mathbb F}_n^*$ then $\{\omega^{m(i)}\mid i=0,\ldots, d-1\}$ is a set of coset representatives for $A$ in ${\mathbb F}_n^*$. We now let each $g\in A\omega^{m(i)}\subseteq {\mathbb F}_n^*$ induce the semilinear transformation
\[\tau_g:t\mapsto t^{\theta^i}g=t^{q^i}g\]
of ${\mathbb F}_n$, and we define $H:=\{\tau_g\mid g\in {\mathbb F}_n^*\}$. We  need to ensure that $H$ is a group under composition. If $h\in A\omega^{m(j)}$ then (composing from left to right) we have
\[\tau_g \circ \tau_h : t \mapsto (t^{q^i}g)^{q^j}h=t^{q^{i+j}}g^{q^j}h,\]
so for this to have the form $\tau_k$ for some $k\in {\mathbb F}_n^*$ we must ensure that $g^{q^j}h\in A\omega^{m(i+j)}$. This will happen if our chosen residues $m(i)$ satisfy
\[m(i+j)=q^jm(i)+m(j)\]
for all $i$ and $j$. An obvious solution for this identity is to take each
\[m(i)=1+q+q^2+\cdots+q^{i-1}=\frac{q^i-1}{q-1}.\]
Then $H$ is closed under composition, so it is a subgroup of $\Gamma L_1({\mathbb F}_n)$. It fixes $0$, and is transitive on ${\mathbb F}_n^*$ since $\tau_g$ sends $1$ to $g$ for each $g\in {\mathbb F}_n^*$. Since $|H|=|{\mathbb F}_n^*|$ we deduce that $H$ acts regularly on ${\mathbb F}_n^*$, so that
\[G:=\{t\mapsto t\tau_g+b\mid g\in {\mathbb F}_n^*, b\in {\mathbb F}_n\}\]
acts on ${\mathbb F}_n$ as a sharply $2$-transitive subgroup of $A\Gamma L_1({\mathbb F}_n)$.

Note there is an epimorphism $H\to\Gamma, g\mapsto\theta^i$ where $g\in A\omega^{m(i)}$. The kernel is $A$, so (like ${\mathbb F}_n^*$) $H$ is an extension of a normal subgroup $A$ by $\Gamma$. Indeed, $H$ and ${\mathbb F}_n^*$ induce the same group structure on their subgroup $A$, and also on its quotient group, even though $H$, being nonabelian, is not isomorphic to ${\mathbb F}_n^*$ if $d>1$.

Having motivated this definition of $m(i)$, we need to choose $q$ and $d$ carefully so that conditions (a) and (b) are satisfied. Elementary number theory (see~\cite[Theorem~6.4]{Lun} or~\cite{Zass2} for details) shows that the following is a sufficient condition for this:

(c) if $r$ divides $d$, where $r$ is prime or $r=4$, then $r$ divides $q-1$.

The near-field $F$ corresponding to $G$ has the same underlying set and additive structure as ${\mathbb F}_n$, but multiplication (of non-zero elements $g$ and $h$) reflects composition of the corresponding elements of $\tau_g, \tau_h\in H$, so that the product of $g$ and $h$ in $F$ is given by $g\circ h=k=g^{q^j}h$ where $h\in A\omega^{m(j)}$ (and hence the centre of $F$ is ${\mathbb F}_q$). The group $G$ defined above can now be identified with the group $AGL_1(F)$ of affine transformations of this near-field $F$.

Dickson near-fields $F$ of order $n\equiv 3$ mod~$(4)$ appear in Theorem~\ref{MergedJohnsonCayley}(1). In such cases $e$ and hence $d$ are odd, so $H$ has a unique subgroup $H^2$ of index $2$, consisting of those $\tau_g$ such that $g$ is a square in ${\mathbb F}_n^*$. Extending $H^2$ by the translation group gives the unique subgroup $AHL_1(F)$ of index $2$ in $AGL_1(F)$. The involution $-1\in {\mathbb F}_n^*$ is in $A$, so $\tau_{-1}$ acts on $F$ as $t\mapsto -t$; the involution $f:t\mapsto t\tau_{-1}+1$ in $AGL_1(F)$ thus preserves the $2$-element subset $\{0, 1\}$ of $F$, and hence generates its stabiliser. Since $n\equiv 3$ mod~$(4)$, $-1$ is a non-square in ${\mathbb F}_n^*$, so $AHL_1(F)$ does not contain $f$ and hence acts regularly on $2$-element subsets of $F$.

In addition to the Dickson near-fields described above, there are seven exceptional finite near-fields $F$: together with the Dickson near-fields, they appear in Theorem~\ref{MergedJohnson2reg}(1). They have order $n=p^2$, and thus correspond to sharply $2$-transitive groups $G=AGL_1(F)$ of degree $n$, for the primes $p=5$, $7$, $11$ (twice), $23$, $29$ and $59$. In each case, $G$ is a subgroup of $AGL_2(p)$ containing the translation group, and $G_0$ ($\cong F^*$) is a subgroup of $GL_2(p)$ acting regularly on non-zero vectors. When $p=5$, $7$ or $11$ one can take $G_0$ to be the binary tetrahedral, binary octahedral or binary icosahedral group $2T\cong SL_2(3)$, $2O\cong 2.S_4^-$ or $2I\cong SL_2(5)$; for $p=11$ (again), $23$, $29$ or $59$ one can take $G_0=2T\times C_5$, $2O\times C_{11}$, $2I\times C_7$ or $2I\times C_{29}$, with the cyclic direct factor consisting of scalar matrices. 

\bigskip

\centerline{\sc Acknowledgment} 
\vskip 5pt

The second author acknowledges the support by 
the projects VEGA 1/0577/14, VEGA 1/0474/15, NSFC 11371307,
and both authors acknowledge support by the project Mobility - enhancing research, science and
education at the Matej Bel University, ITMS code: 26110230082, under the
Operational Program Education cofinanced by the European Social Fund.

\end{document}